\theoremstyle{theorem}
\newtheorem{satz}{Theorem}[section]
  \newtheorem{lemma}[satz]{Lemma}
  \newtheorem{kor}[satz]{Corollary}
  \newtheorem{thmx}{Theorem}
  \newtheorem{prop}[satz]{Proposition}
  \theoremstyle{definition}
 \newtheorem{defi}[satz]{Definition}
  \newtheorem{bem}[satz]{Remark}
  \newtheorem{ex}[satz]{Example}
\newcommand{\norm}[1]{\lVert#1\rVert}   
\newcommand{\betrag}[1]{\lvert#1\rvert}
\newcommand{\KK}{\mathrm{KK}}
\newcommand{\K}{\mathrm K}
\newcommand{\NN}{\mathbb N}
\newcommand{\ZZ}{\mathbb Z}
\newcommand{\TT}{\mathbb T}
\newcommand{\lk}{\langle}
\newcommand{\rk}{\rangle}
\newcommand{\id}{\text{id}}
\title[$\K$-theory and homotopies of twists on ample groupoids]{$\K$-theory and homotopies of twists on ample groupoids}
\author{Christian B\"onicke}
\address{School of Mathematics and Statistics, University of Glagow, University Gardens, Glasgow, G12 1RX, UK}
\email{christian.bonicke@glasgow.ac.uk}
\thanks{This research was partially supported by Deutsche Forschungsgemeinschaft (SFB 878).}
\subjclass[2010]{46L80, 46L55}
\keywords{Twisted groupoid $\mathrm{C}^*$-algebra, K-theory}
\begin{document}
\maketitle

\begin{abstract}
This paper investigates the $\mathrm{K}$-theory of twisted group\-oid $\mathrm{C}^*$-algebras. It is shown that a homotopy of twists on an ample groupoid satisfying the Baum-Connes conjecture with coefficients gives rise to an isomorphism between the $\K$-theory groups of the respective twisted  groupoid $\mathrm{C}^*$-algebras. 
The results are also interpreted in an inverse semigroup setting and applied to generalized Renault-Deaconu groupoids and $P$-graph algebras.
\end{abstract}

\section{Introduction}
A key idea in noncommutative geometry is to access the study of topological dynamical systems (for example a discrete group acting on a compact space by homeomorphisms) through a naturally associated noncommutative $\mathrm{C}^*$-algebra. Many of these constructions can be conveniently studied simultaneously in the framework of topological groupoids.
Indeed, large classes of $\mathrm{C}^*$-algebras have been shown to admit groupoid models since Renault's seminal work on groupoid $\mathrm{C}^*$-algebras in \cite{Renault1980}. The range of examples covers crossed products associated to transformation groups, (higher rank) graph-$\mathrm{C}^*$-algebras \cite{KumjianPask00}, the uniform Roe algebra \cite{STY02}, and the $\mathrm{C}^*$-algebras associated to various classes of semigroups \cite{Paterson1999,MR3077884} to name but a few prominent ones.
A slight alteration of the construction takes certain cohomological objects called twists associated with a groupoid $G$ into account. A twist over $G$ is an extension of $G$ by the trivial circle bundle $G^{(0)}\times\TT$.

Twisted groupoid $\mathrm{C}^*$-algebras are useful in the study of untwisted algebras \cite{MR3162248}, but the addition of a twist as input data has much further reaching consequences: Indeed, building on the theory of Feldman-Moore in the von Neumann setting \cite{MR0578656,MR0578730}, Kumjian and Renault showed \cite{MR854149,MR2460017} that whenever a $\mathrm{C}^*$-algebra $A$ admits a Cartan subalgebra, then $A$ is in fact isomorphic to a twisted groupoid $\mathrm{C}^*$-algebra. In other words, (twisted) groupoid $\mathrm{C}^*$-algebras appear intrinsically in $\mathrm{C}^*$-algebra theory.

This article is concerned with the $\K$-theory of twisted group\-oid $\mathrm{C}^*$-algebras. The main result establishes that the $\mathrm{K}$-theory of a twisted groupoid $\mathrm{C}^*$-algebra is invariant with respect to homotopies of twists for a large class of groupoids and reads as follows:
\begin{thmx}\label{MainTheoremIntro}(see Theorem \ref{Thm:Evalutation Induces Isomorphism on K-theory})
	Let $G$ be a second countable ample groupoid, which satisfies the Baum-Connes conjecture with coefficients and let $\Sigma$ be a  homotopy of twists for $G$. Then the canonical map $q_t:C_r^*(G\times [0,1];\Sigma)\rightarrow C_r^*(G,\Sigma_t)$ given by evaluation induces an isomorphism $$(q_{t})_*:\K_*(C_r^*(G\times[0,1];\Sigma))\rightarrow \K_*(C_r^*(G,\Sigma_t)).$$
\end{thmx}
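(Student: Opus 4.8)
The plan is to use the Baum-Connes machinery to reduce the statement about $\K$-theory of twisted reduced groupoid $\mathrm{C}^*$-algebras to a statement about the topological side, where the homotopy can be handled directly. I would organize the proof around the following observation: a twist $\Sigma$ over $G$ gives rise, via its associated line bundle, to an element of the $G$-equivariant $\K$-theory of $G^{(0)}$ (equivalently, a class making $C_0(G^{(0)})$ into a $G$-algebra twisted by $\Sigma$), and the twisted reduced crossed product $C_r^*(G,\Sigma)$ is the reduced crossed product of this $G$-algebra. The homotopy of twists $\Sigma$ over $G\times[0,1]$ thus produces a $G\times[0,1]$-algebra, and the evaluation map $q_t$ is the crossed product of the corresponding evaluation at $t$ on coefficients.

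\medskip

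First I would set up the Baum-Connes assembly map with coefficients in the relevant twisted coefficient algebras. Since $G$ satisfies the Baum-Connes conjecture with coefficients, the assembly map
\begin{equation*}
\mu_{G,B}:\K^{\mathrm{top}}_*(G;B)\longrightarrow \K_*(C_r^*(G,B))
\end{equation*}
is an isomorphism for every (twisted) $G$-$\mathrm{C}^*$-algebra $B$, and crucially this isomorphism is natural in $B$. I would apply this to the groupoid $G\times[0,1]$ with coefficient algebra coming from $\Sigma$, and to $G$ with coefficients $\Sigma_t$, noting that $G\times[0,1]$ inherits the Baum-Connes property from $G$ because $[0,1]$ is contractible (the classifying space and assembly map are compatible with taking products with a contractible space). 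By naturality of assembly applied to the evaluation $*$-homomorphism on coefficients, I reduce the claim to showing that evaluation induces an isomorphism on the topological side, i.e.\ that
\begin{equation*}
(q_t)_*:\K^{\mathrm{top}}_*(G\times[0,1];\Sigma)\longrightarrow\K^{\mathrm{top}}_*(G;\Sigma_t)
\end{equation*}
is an isomorphism.

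\medskip

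The topological side is computed using a proper, $G\times[0,1]$-compact model for the classifying space, on which the crossed products become (stably) continuous-trace or at least amenable enough that their $\K$-theory is homotopy invariant. The key point is that over such a proper space the twist is implemented by an honest (Morita) automorphism, and the family $\{\Sigma_t\}_{t\in[0,1]}$ becomes a genuine homotopy of Dixmier-Douady type invariants; since $\K$-theory of the relevant $\mathrm{C}^*$-algebras is invariant under homotopy of the twisting cocycle and $[0,1]$ deformation retracts to the point $t$, evaluation is an isomorphism there. Assembling these local computations—using that $\K^{\mathrm{top}}$ is defined as a colimit over $G$-compact subsets of the classifying space and that the product with $[0,1]$ is compatible with this colimit—yields the desired isomorphism on the topological side.

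\medskip

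The main obstacle I anticipate is the first step: making precise the sense in which a \emph{twist} (as opposed to a continuous $2$-cocycle or a genuine $G$-algebra) fits into the equivariant $\KK$/Baum-Connes framework, and verifying that the assembly map is natural with respect to the evaluation $q_t$ at the level of twisted coefficients. One must check that a homotopy of twists really does assemble into a single twist over $G\times[0,1]$ whose associated coefficient algebra is a $G\times[0,1]$-$\mathrm{C}^*$-algebra in the appropriate technical sense (e.g.\ a continuous field over $[0,1]$ of twisted coefficient algebras), and that evaluation is a $G$-equivariant $*$-homomorphism to which functoriality of assembly applies. Once this bridge between twists and equivariant coefficients is established—likely via the Fell-bundle or the line-bundle description of twists on an ample groupoid—the remaining homotopy-invariance argument on the topological side should go through by standard means.
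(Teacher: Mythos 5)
Your high-level strategy---trade the twist for coefficient data, use naturality of the Baum--Connes assembly map to reduce to the topological side, and handle the homotopy there---does match the overall shape of the paper's argument, but as written the proposal has two genuine gaps, the second of which is fatal. The first gap is the bridge you yourself flag as ``the main obstacle'' and then defer: the Baum--Connes conjecture with coefficients concerns honest $G$-$\mathrm{C}^*$-algebras, so the twist must be converted into an actual action before assembly can be invoked. The paper does this via the Packer--Raeburn stabilisation trick of van Erp and Williams (Proposition \ref{Prop:PackerRaeburn}): the twist $\Sigma$ produces a Hilbert $C_0(G^{(0)})$-module $E$ and a genuine action of $G\times[0,1]$ on $K(E)$ with $K(E)\rtimes_r(G\times[0,1])$ Morita equivalent to $C_r^*(G\times[0,1];\Sigma)$; this action is then converted into an action of $G$ itself by the pushforward construction and LaLonde's isomorphism, so that only Baum--Connes for $G$ is ever needed. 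In particular, your claim that $G\times[0,1]$ inherits the conjecture ``because $[0,1]$ is contractible'' is not the right mechanism: the relevant point is that $G\times[0,1]\cong G\ltimes(G^{(0)}\times[0,1])$, so its coefficient algebras are $G$-algebras with the same crossed products. One must also verify that the evaluation maps intertwine the Morita equivalences on either side---this is exactly the content of Proposition \ref{Prop:Bimodules} and Lemma \ref{Lem:Evaluation1}, i.e.\ the ``naturality at the level of twisted coefficients'' that you name but do not establish.

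The second, more serious gap is that your treatment of the topological side is circular. You assert that over a proper, $G$-compact piece of the classifying space the relevant $\mathrm{C}^*$-algebras have $\K$-theory ``invariant under homotopy of the twisting cocycle''---but this homotopy invariance is precisely the kind of statement being proven, and it is false without properness: as the paper's introduction points out, a homotopy of twists does not yield a homotopy equivalence of $\mathrm{C}^*$-algebras (the irrational rotation algebra $\mathcal{A}_\theta$, whose cocycle is homotopic to the trivial one, is the standard counterexample). So properness must enter through an actual argument, and your sketch supplies none. The paper's mechanism is the Going-Down principle for ample groupoids \cite[Theorem~7.10]{1806.00391}: the map $\K_*^{\mathrm{top}}(G;K(E))\rightarrow\K_*^{\mathrm{top}}(G;K(E_t))$ is an isomorphism provided the induced maps are isomorphisms after restriction to every \emph{compact open} subgroupoid $H\subseteq G$, and for compact $H$ one can finally quote Gillaspy's theorem (Proposition \ref{Prop:CompactCase}) that for a \emph{continuous} homotopy of twists over a compact groupoid the evaluation $q_t$ is an honest homotopy equivalence. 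Note that ampleness is used twice here---once to have enough compact open subgroupoids for the Going-Down principle, and once (via Proposition \ref{Prop:HomotopiesAreContinuous}) to guarantee the continuity of the homotopy of twists that Gillaspy's result requires---whereas your argument never uses ampleness at all, which is a strong sign that a key ingredient is missing.
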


Some remarks on Theorem \ref{MainTheoremIntro} are in order: 
First, the result is non-trivial since a homotopy of twists does not result in a homotopy-equivalence of the twisted groupoid $\mathrm{C}^*$-algebras. A basic example is provided by the irrational rotation algebra $\mathcal{A}_\theta$, which can be realized as a twisted group algebra $\mathcal{A}_\theta=C^*(\ZZ^2,\omega_\theta)$ with respect to the $2$-cocycle $\omega_\theta((k,l),(n,m))=e^{2\pi i \theta ln}$. This cocycle is homotopic to the constant cocycle but $\mathcal{A}_\theta$ is not homotopy-equivalent to $C(\TT^2)$ since this would require the existence of a $\ast$-homomorphism $\mathcal{A}_\theta\rightarrow C(\TT^2)$ of which there are none since $\mathcal{A}_\theta$ is a simple, non-trivial noncommutative algebra.
Our second remark concerns the assumptions of Theorem \ref{MainTheoremIntro}:
Although the Baum-Connes conjecture with coefficients is known to be false in general \cite{MR1911663}, it has been confirmed for large classes of groupoids, including all amenable groupoids \cite{Tu98}. Thus, our result applies to large classes of examples.

Many concrete twists arising in examples of interest are actually homotopic to the trivial twist.
In these cases, our result says that the $\K$-theory of the twisted groupoid $\mathrm{C}^*$-algebra does not depend on the twist at all.
Examples include the already mentioned irrational rotation algebras $\mathcal{A}_\theta$ and their higher-dimensional analogues and the Heegaard-type quantum spheres \cite[Example~3.5]{MR3011251}.

The motivation for this work is twofold: There is a strong interest in understanding the $\mathrm{K}$-theory of twisted groupoid $\mathrm{C}^*$-algebras coming from the classification program for simple, unital, nuclear $\mathrm{C}^*$-algebras on the one hand and coming from the physics of quasicrystals on the other.

By the work of numerous researchers (see \cite{MR3583354} and the references therein), there is now a complete classification of separable, simple, unital $\mathrm{C}^*$-algebras of finite nuclear dimension satisfying Rosenberg and Schochet's universal coefficient theorem (UCT).
The classifying invariant is the so called Elliott-invariant, whose main ingredient is operator $\K$-theory.
Combining the astonishing results from \cite{1802.01190,MR2329002}, every $\mathrm{C}^*$-algebra falling within the scope of classification admits a twisted groupoid model. Even better, the nuclearity assumption in the classification theorems implies that the underlying groupoid is amenable and hence satisfies the Baum-Connes conjecture.
This is closely linked to the question whether all nuclear $\mathrm{C}^*$-algebras satisfy the UCT and to the existence of Cartan subalgebras \cite{MR3672919}.

In another direction, twisted groupoid $\mathrm{C}^*$-algebras and their $\K$-theo\-ry play an important role in physics: In the article \cite{MR2119241} and the references therein a link to classification of $D$-brane charges in string theory is explained.
More directly related to our work is another important application studied in \cite{MR3438328}: A twisted version of Bellissard's Gap Labelling Conjecture \cite{MR1798994}.
The classical (untwisted) version of the conjecture is a statement about the possible gaps in the spectrum of certain Schrödinger operators which arises in solid state physics. Mathematically, the conjecture is about a description of the image of a canonical trace map on the $\K_0$-group of a certain groupoid $\mathrm{C}^*$-algebra. A twist over that groupoid enters the picture once one allows for the presence of a non-zero magnetic field.

In conclusion, the $\K$-theory of twisted groupoid $\mathrm{C}^*$-algebras plays a major role in the classification program for simple $\mathrm{C}^*$-algebras as well as in the applications from physics. This provides a strong motivation to study methods that enable the computation of $\K$-theory for twisted groupoid $\mathrm{C}^*$-algebras.
This question has been addressed before in the case of groups by Echterhoff, Lück, Phillips and Walters in \cite{MR2608195} and Gillaspy treated the cases of transformation groups, higher-rank graphs and group bundles in a series of articles \cite{MR3407179,MR3346133,MR3563179}. Using the machinery developed by the author in \cite{1806.00391} this article presents a unified approach and considerable extension of the above mentioned results.

To make the result accessible for readers with different backgrounds we also interpret Theorem \ref{MainTheoremIntro} in the setting of twisted actions of inverse semigroups in the sense of \cite{MR2821242,MR1620499}.
Finally, we apply our results to semidirect product groupoids arising from (partial) actions of subsemigroups of groups. These groupoids can be viewed as a generalisation of the transformation groupoids associated to ordinary group actions. In particular, we can recover and generalize the main results of \cite{MR3407179}.

The article is organized as follows: 
After recalling some basic facts about étale groupoids, groupoid dynamical systems and the associated reduced crossed products, we turn to the study of twists. We show in Proposition \ref{Prop:LocalSections}, that every twist over an étale groupoid locally admits continuous cross sections and conclude that every twist over an ample groupoid admits a global continuous cross section.
In section \ref{Section:MainSection} we prove our main result. The idea is to use the Going-Down principle to reduce to the case of compact groupoids, where the result can be proven directly.
In the two final sections we look at some examples: First, we interpret our main result in terms of twisted actions of inverse semigroups, and second, we study certain semidirect-product groupoids arising from actions of subsemigroups of groups.

\section{Preliminaries}
\subsection{Étale and ample groupoids}
Recall, that a \textit{groupoid} is a set $G$ together with a subset $G^{(2)}\subseteq G\times G$, called the set of \textit{composable pairs,} a \textit{product} map $(g,h)\mapsto gh$ from $G^{(2)}$ to $G$ and an \textit{inverse} map $g\mapsto g^{-1}$ from $G$ onto $G$, such that the following hold:
	\begin{enumerate}
		\item The product is associative: If $(g_1,g_2),(g_2,g_3)\in G^{(2)}$ for some $g_1,g_2,g_3\in G$, then we also have $(g_1g_2,g_3),(g_1,g_2g_3)\in G^{(2)}$ and 
		\begin{equation*}
		(g_1g_2)g_3=g_1(g_2g_3).
		\end{equation*}
		\item The inverse map is involutive, i.e. $(g^{-1})^{-1}=g$ for all $g\in G$.
		\item $(g,g^{-1})\in G^{(2)}$ for all $g\in G$ and if $(g,h)\in G^{(2)}$, then 
		\begin{equation*}
		g^{-1}(gh)=h \ \textup{ and }\ (gh)h^{-1}=g.
		\end{equation*}
	\end{enumerate}
The fact that multiplication is partially defined implies that multiple elements may act as (partial) units:
	The set $G^{(0)}:=\lbrace g\in G\mid g=g^{-1}=g^2\rbrace$ is called the \textit{set of units} in $G$. There are canonical maps $d:G\rightarrow G^{(0)}$ given by $d(g)=g^{-1}g$ and $r:G\rightarrow G^{(0)}$ given by $r(g)=gg^{-1}$, called the \textit{domain} and \textit{range} \textit{map} respectively.

For subsets $A,B\in G^{(0)}$ we will write $G_A:=d^{-1}(A)$, $G^B:=r^{-1}(B)$ and $G_A^B:=G_A\cap G^B$. If $A$ (and/or $B$) consists just of a single unit $u\in G^{(0)}$ we will omit the braces (e.g.: we will write $G^u:=r^{-1}(\lbrace u\rbrace)$).

In this article we will be concerned with topological groupoids:
We say that $G$ is a locally compact Hausdorff groupoid, if $G$ is a groupoid, which is equipped with a locally compact Hausdorff topology, such that the multiplication and inversion map are continuous. The fact that $G$ is Hausdorff ensures that the unit space $G^{(0)}$ is closed in $G$.

A locally compact groupoid is called \textit{étale}, if $d:G\rightarrow G$ is a local homeomorphism, i.e. every point $g\in G$ has an open neighbourhood $U\subseteq G$, such that $d(U)$ is open in $G$ and $d_{\mid U}:U\rightarrow d(U)$ is a homeomorphism.
It follows easily from the definition that for an étale groupoid $G$
the unit space $G^{(0)}$ is open in $G$ and for each $u\in G^{(0)}$ the sets $G^u$ and $G_u$ are discrete (in the subspace topology).

The results of the present article apply to a large subclass of the étale groupoids:
An étale groupoid $G$ is called \textit{ample}, if its space of units $G^{(0)}$ is totally disconnected.
There is a wealth of groupoids studied in the literature that belong to this class, e.g groupoids associated to aperiodic tilings and quasicrystals (see \cite{MR2658985}), groupoids associated to directed graphs (see \cite{MR1432596}) and higher-rank graphs (see \cite{KumjianPask00}), groupoids associated to inverse semigroups (see \cite{Paterson1999}), and the coarse groupoid studied in large scale geometry (see \cite{STY02}).

\subsection{Groupoid crossed products}
In this short paragraph we remind the reader of the definitions of groupoid dynamical systems and the associated reduced crossed products.
Let $G$ be an étale groupoid.
A \textit{groupoid dynamical system} $(A,G,\alpha)$ consists of a $C_0(G^{(0)})$-algebra $A$ and a family $(\alpha_g)_{g\in G}$ of $*$-isomorphisms $\alpha_g:A_{d(g)}\rightarrow A_{r(g)}$ such that $\alpha_{gh}=\alpha_g\circ \alpha_h$ for all $(g,h)\in G^{(2)}$ and such that $g\cdot a:=\alpha_g(a)$ defines a continuous action of $G$ on the upper-semicontinuous bundle $\mathcal{A}$ over $G^{(0)}$ associated with $A$.

Associated with this data is a $\mathrm{C}^*$-algebra called the reduced crossed product. We will briefly run through the constructions roughly following \cite{MR1900993}.
Given a groupoid dynamical system $(A,G,\alpha)$ as above, consider the complex vector space $\Gamma_c(G,r^*\mathcal{A})$. It carries a canonical $*$-algebra structure with respect to the following operations:
$$(f_1\ast f_2)(g)=\sum\limits_{h\in G^{r(g)}} f_1(h)\alpha_h(f_2(h^{-1}g))$$
and
$$f^*(g)=\alpha_g(f(g^{-1})^*).$$
See for example \cite[Proposition~4.4]{MR2547343} for a proof of this fact.
For $u\in G^{(0)}$ consider the Hilbert $A_u$-module $\ell^2(G^u,A_u)$. It is the completion of the space of finitely supported $A_u$-valued functions on $G^u$, with respect to the inner product 
$$\lk \xi,\eta\rk=\sum\limits_{h\in G^u}\xi(h)^*\eta(h).$$
We can then define a $*$-representation $\pi_u:\Gamma_c(G,r^*\mathcal{A})\rightarrow \mathcal{L}(\ell^2(G^u,A_u))$ by
$$\pi_u(f)\xi(g)=\sum\limits_{h\in G^u}\alpha_g(f(g^{-1}h))\xi(h).$$
Using this family of representations, we can define a $\mathrm{C}^*$-norm on the convolution algebra $\Gamma_c(G,r^*\mathcal{A})$ by
$$\norm{f}_r:=\sup\limits_{u\in G^{(0)}}\norm{\pi_u(f)}.$$
The reduced crossed product $A\rtimes_r G$ is defined to be the completion of $\Gamma_c(G,r^*\mathcal{A})$ with respect to $\norm{\cdot}_r$.

\section{Twists over ample groupoids}
Let us recall the definition of a twist over a groupoid:
	\begin{defi}
		Let $G$ be a topological groupoid. A \textit{twist} $\Sigma$ over $G$ is a central groupoid extension
		$$G^{(0)}\times\TT\stackrel{i}{\longrightarrow} \Sigma\stackrel{j}{\longrightarrow} G,$$
		by which we mean:
		\begin{enumerate}
			\item The map $i$ is a homeomorphism onto $j^{-1}(G^{(0)})\subseteq \Sigma$,
			\item the map $j$ is a continuous and open surjection, and
			\item the extension is central meaning that $i(r(\sigma),z)\sigma=\sigma i(d(\sigma),z)$ for all $\sigma\in \Sigma$ and $z\in\TT$.
		\end{enumerate}
		We say that $\Sigma$ is a \textit{continuous twist} over $G$, if $j$ admits a continuous cross section.
	\end{defi}
	Note, that we will canonically identify $\Sigma^{(0)}$ with $G^{(0)}$ and for all $u\in G^{(0)}$ we have $j(i(u,z))=u$. Moreover, $\Sigma$ admits a canonical left action of $\TT$ given by $z\cdot \sigma:=i(r(\sigma),z)\sigma$. Hence a twist can also be viewed as a principal $\TT$-bundle over $G$.
	
	\begin{bem}\label{Rem:Cocycles}
		Twists over groupoids are closely related to $2$-cocycles on them.
		Recall, that a $2$-cocycle for $G$ is a map $\omega:G^{(2)}\rightarrow \TT$, such that $$\omega(g_1,g_2)\omega(g_1g_2,g_3)=\omega(g_1,g_2g_3)\omega(g_2,g_3)$$
		for all $g_1,g_2,g_3\in G$ with $(g_1,g_2),(g_2,g_3)\in G^{(2)}$. It is called \textit{normalized} if in addition one has
		$$\omega(g,d(g))=1=\omega(r(g),g)$$
		for all $g\in G$.
		
		Given such a $2$-cocycle $\omega$ on $G$ we can define a groupoid structure on $\Sigma_\omega:=G\times  \TT$ as follows: Two elements $(g_1,s_1),(g_2,s_2)\in \Sigma_\omega$ are composable if $(g_1,g_2)\in G^{(2)}$ and their product is defined as
		$$(g_1,s_1)(g_2,s_2):=(g_1g_2,s_1s_2\omega(g_1,g_2)).$$
		The inverse of $(g,s)\in \Sigma_\omega$ is given by
		$$(g,s)^{-1}:=(g^{-1},\overline{s\omega(g^{-1},g)}).$$
		If $\omega$ is continuous, it is not hard to check that $\Sigma_\omega$ is a locally compact Hausdorff groupoid in the product topology. Thus, we obtain a central extension of groupoids
		$$ G^{(0)}\times\TT \stackrel{i}{\longrightarrow} \Sigma_\omega\stackrel{j}{\longrightarrow} G,$$
		where the first map is the canonical inclusion and the second map is the projection onto the first factor. Note, that $j$ has a canonical continuous cross section $s$ given by $s(g)=(g,1)$.
		
		Conversely, starting with a twist $\Sigma$ over $G$ admitting a continuous section $s:G\rightarrow \Sigma$, we note that $j(s(gh)^{-1}s(g)s(h))\in G^{(0)}$. Hence, by exactness we get $s(gh)^{-1}s(g)s(h)\in i(G^{(0)}\times \TT)$. Since $i$ is a homeomorphism onto its image, we obtain a continuous map $\omega:G^{(2)}\rightarrow\TT$ by letting $\omega(g,h)=i^{-1}(s(gh)^{-1}s(g)s(h))$. It is then routine to check that $\omega$ satisfies the cocycle identity. It is normalized, provided that the section $s$ restricts to the identity on $G^{(0)}$.
		If $G$ is an étale Hausdorff groupoid, this can always be arranged: If $s:G\rightarrow \Sigma$ is any continuous section, then define a new section $s':G\rightarrow \Sigma$ by $s'(g)=s(g)$ for all $g\in G\setminus G^{(0)}$ and $s'(u)=u$ for all $u\in G^{(0)}$. Then $s'$ is still continuous since $G^{(0)}$ is clopen in $G$.
		
		We remark, that it is a well-known fact that twists do not generally admit a continuous cross section and thus are more general than $2$-cocycles (see \cite{MR1174207}).
	\end{bem}

	As a first goal we wish to show that twists over ample groupoids are automatically continuous. To this end we need to introduce some technology developed in \cite{VanErp}.
	Let $X$ be a locally compact Hausdorff space and $E$ be a Hilbert $C_0(X)$-module. Write $\mathcal{E}$ for the associated bundle of Hilbert spaces $\mathcal{E}=\coprod_{x\in X} E_x$, which can be equipped with a topology turning it into a continuous Hilbert bundle over $X$ such that $E$ is isomorphic to the continuous sections of the bundle $\mathcal{E}\rightarrow X$ vanishing at infinity denoted by $\Gamma_0(X,\mathcal{E})$.
	Associated with such a module is the groupoid
	$$Iso(E):=\lbrace (x,V,y)\mid x,y\in X\textit{ and }V:E_y\rightarrow E_x\textit{ is a unitary}\rbrace,$$
	equipped with the obvious structure:
	$$(x,V,y)(y,W,z):=(x,VW,z)\textit{ and }(x,V,y)^{-1}:=(y,V^*,x).$$
	It was shown in \cite[Proposition~3.3]{VanErp} that $Iso(E)$ can be equipped with a canonical Hausdorff topology making it into a topological group\-oid.
	
	In a similar spirit one can define the automorphism groupoid of an upper semicontinuous $\mathrm{C}^*$-bundle $p:\mathcal{A}\rightarrow X$ over $X$ by
	$$Aut(\mathcal{A}):=\lbrace (x,\alpha,y)\mid x,y\in X,\ \alpha:A_y\rightarrow A_x \textit{ is a }\ast-isomorphism\rbrace.$$
	Again, $Aut(\mathcal{A})$ can be equipped with a Hausdorff topology making it into a topological groupoid by \cite[Proposition~3.1]{VanErp}.
	
	The two constructions presented above are closely related. If $E$ is a Hilbert $C_0(X)$-module as above, then we can consider the $\mathrm{C}^*$-algebra $K(E)$ of compact operators on $E$. This algebra is a $C_0(X)$-algebra in a canonical way with fibres $K(E)_x=K(E_x)$. Consequently, we can form the associated upper semicontinuous $\mathrm{C}^*$-bundle denoted by $\mathcal{K}\rightarrow X$, such that $\Gamma_0(X,\mathcal{K})\cong K(E)$ as $C_0(X)$-algebras.
	Every unitary operator $V:E_y\rightarrow E_x$ clearly defines a $\ast$-isomorphism $\mathrm{Ad}\ V:K(E_y)\rightarrow K(E_x)$ given by $(\mathrm{Ad}\ V)(T)=VTV^*$. Hence we obtain a canonical groupoid homomorphism $\mathrm{Ad}:Iso(E)\rightarrow Aut(\mathcal{K})$. It was shown in \cite[Proposition~3.4]{VanErp} that $\mathrm{Ad}$ fits into a short exact sequence of topological groupoids
	$$X\times \TT\stackrel{i}{\rightarrow} Iso(E)\stackrel{\mathrm{Ad}}{\rightarrow} Aut(\mathcal{K}),$$
	such that $\mathrm{Ad}$ is a continuous and open surjection, and $i$ is a homeomorphism onto the kernel of $\mathrm{Ad}$.
	 Moreover, the map $\mathrm{Ad}$ admits local continuous sections, in the sense that for every $(x,\alpha,y)\in Aut(\mathcal{K})$ there exists a neighbourhood $N$ of $(x,\alpha,y)\in Aut(\mathcal{K})$ and a continuous map $\beta:N\rightarrow Iso(E)$ such that $\mathrm{Ad} \circ\beta=\id_N$.
	
	We want to apply this machinery to twists over groupoids. Although much of what follows also works for more general groupoids we restrict ourselves to the class of étale groupoids at this point to avoid unnecessary technicalities. So let $\Sigma$ be a twist over an étale groupoid $G$. 
	Consider the complex vector space
	$$C_c(G;\Sigma):=\lbrace f\in C_c(\Sigma)\mid f(z\sigma)=zf(\sigma)\rbrace.$$ 
	Then we obtain a Hilbert $C_0(G^{(0)})$-module $E$ by separation and completion of $C_c(G;\Sigma)$ with respect to the inner product
	\begin{equation}\label{Eq:TwistedInnerProduct}
	\lk f_1,f_2\rk_{C_0(G^{(0)})}(u)=\sum\limits_{j(\sigma)\in G_u} \overline{f_1(\sigma)}f_2(\sigma).
	\end{equation}
		Each of the fibres $E_u$ is a Hilbert space and the following lemma gives a convenient description of those.
		\begin{lemma}\label{Lem:ContinuousTwist}
			If $G^{(0)}\times \TT\rightarrow\Sigma\stackrel{j}{\rightarrow}G$ is a twist over $G$ then the fibre $E_u$ over $u\in G^{(0)}$ can be identified with the Hilbert space obtained by completion of $E_0(u)=\lbrace f\in C_c(\Sigma_u)\mid f(z\sigma)=zf(\sigma)\rbrace$ with respect to the inner product $\lk f_1,f_2\rk=\sum_{j(\sigma)\in G_u} \overline{f_1(\sigma)}f_2(\sigma).$
		\end{lemma}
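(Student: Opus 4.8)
The plan is to realise the fibre $E_u$ through the obvious restriction map and to identify its image with the completion $\overline{E_0(u)}$. Recall that for any Hilbert $C_0(G^{(0)})$-module the fibre at $u$ is the Hilbert space $E_u=E/\overline{E\cdot I_u}$, where $I_u\subseteq C_0(G^{(0)})$ is the ideal of functions vanishing at $u$; concretely it is the completion of $C_c(G;\Sigma)/N_u$, where $N_u=\lbrace f\mid \lk f,f\rk_{C_0(G^{(0)})}(u)=0\rbrace$, equipped with the inner product $\lk[f_1],[f_2]\rk=\lk f_1,f_2\rk_{C_0(G^{(0)})}(u)$. First I would define the restriction map $\rho_u\colon C_c(G;\Sigma)\to E_0(u)$, $f\mapsto f|_{\Sigma_u}$. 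Since $G$ is étale, $G_u$ is discrete, so $\Sigma_u=j^{-1}(G_u)$ is a disjoint union of the circles $j^{-1}(g)$, $g\in G_u$, each clopen in $\Sigma_u$; a compactly supported $f$ therefore restricts to a continuous, $\TT$-equivariant function supported on only finitely many of these circles, so that $\rho_u(f)\in E_0(u)$.

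The second step is to observe that $\rho_u$ intertwines the two inner products: directly from the defining formula of $\lk\cdot,\cdot\rk_{C_0(G^{(0)})}$ one has, for $f_1,f_2\in C_c(G;\Sigma)$, the identity $\lk f_1,f_2\rk_{C_0(G^{(0)})}(u)=\sum_{j(\sigma)\in G_u}\overline{f_1(\sigma)}f_2(\sigma)=\lk\rho_u(f_1),\rho_u(f_2)\rk$, the summand being $\TT$-invariant and hence well defined as a function on $G_u$. Consequently $N_u=\ker\rho_u$, and $\rho_u$ descends to an isometry $\overline{\rho_u}\colon E_u\to\overline{E_0(u)}$. It then remains to prove that $\rho_u$ maps onto $E_0(u)$, for in that case $\overline{\rho_u}$ has dense image and, being isometric, extends to the desired unitary.

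Surjectivity is the heart of the matter and amounts to an equivariant extension problem: every $h\in E_0(u)$ must be the restriction of some global section in $C_c(G;\Sigma)$. Here I would argue circle by circle. Writing $h$ as supported on the circles over finitely many $g_1,\dots,g_n\in G_u$, I would use that $G$ is locally compact Hausdorff, étale, and that $G_u$ is closed and discrete to choose pairwise disjoint open bisections $U_i\ni g_i$ with $U_i\cap G_u=\lbrace g_i\rbrace$, so that $j^{-1}(U_i)\cap\Sigma_u=j^{-1}(g_i)$. On the open, $\TT$-invariant set $j^{-1}(U_i)$ I would first extend the function $h|_{j^{-1}(g_i)}$ from the compact circle to some $\tilde f_i\in C_c(j^{-1}(U_i))$ by Tietze's theorem together with a bump function equal to $1$ on $j^{-1}(g_i)$, ignoring equivariance for the moment.

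The key device to restore equivariance is to average over the circle: setting $f_i(\sigma)=\int_{\TT}\overline{w}\,\tilde f_i(w\cdot\sigma)\,dw$ with respect to normalised Haar measure, a change of variables in the Haar integral shows $f_i(z\cdot\sigma)=z\,f_i(\sigma)$, while continuity of the $\TT$-action makes $f_i$ continuous and the inclusion $\operatorname{supp}(f_i)\subseteq\TT\cdot\operatorname{supp}(\tilde f_i)$ keeps it compactly supported inside $j^{-1}(U_i)$. On the circle $j^{-1}(g_i)$, where $\tilde f_i$ already agrees with the $\TT$-equivariant function $h$, the averaging integrates $\overline{w}\cdot w=1$ and returns $h$, so $f_i|_{j^{-1}(g_i)}=h|_{j^{-1}(g_i)}$, whereas $f_i|_{\Sigma_u}$ vanishes on every other circle because $U_i\cap G_u=\lbrace g_i\rbrace$. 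Extending each $f_i$ by zero and setting $f=\sum_i f_i\in C_c(G;\Sigma)$ yields $\rho_u(f)=h$, establishing surjectivity and hence the identification $E_u\cong\overline{E_0(u)}$. I expect this extension step to be the only genuine obstacle; the averaging trick is precisely what makes it succeed without invoking a (local) continuous section of $j$, whose existence has not yet been established at this point in the paper.
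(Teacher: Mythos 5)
Your proposal is correct and follows essentially the same route as the paper: both reduce the lemma to surjectivity of the restriction map $C_c(G;\Sigma)\to E_0(u)$ and establish it by a Tietze extension followed by the equivariant averaging $f_1(\sigma)=\int_{\TT}\overline{z}f_0(z\sigma)\,dz$. The only difference is organizational: the paper extends $f$ from the closed set $\Sigma_u$ to all of $\Sigma$ in a single step and averages once, whereas you extend circle by circle inside disjoint bisection preimages, which works but is not needed.
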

		\begin{proof}
			One easily sees that the restriction map $C_c(G;\Sigma)\rightarrow E_0(u)$ factors through an isometric linear map $E_u\rightarrow \overline{E_0(u)}$. The only issue is the surjectivity of the restriction map.
			To this end let $f_0\in C_c(\Sigma)$ be an extension of $f$.
			Then $f_1:\Sigma\rightarrow \TT$ defined by
			$$f_1(\sigma):=\int_{\TT}\overline{z}f_0(z\sigma)dz,$$
			is an element of $E_0$, that extends $f$. 
			
		\end{proof}
		Let $\rho:\Sigma\rightarrow Iso(E)$ be the representation given by $(\rho(\sigma)\xi)(\tau)=\xi(\tau\sigma)$. Then $\rho(z\sigma)=z\rho(\sigma)$ for all $z\in \TT$ and $\sigma\in \Sigma$. Consequently, denoting the upper semicontinuous $\mathrm{C}^*$-bundle associated to the $C_0(G^{(0)})$-algebra $K(E)$ by $\mathcal{K}$ again, we obtain a well-defined continuous groupoid homomorphism $\alpha:G\rightarrow Aut(\mathcal{K})$ by $\alpha_{j(\sigma)}=\mathrm{Ad}\ \rho(\sigma)$.
		This homomorphism fits into the following commutative diagram:
		\begin{center}
			\begin{tikzpicture}[description/.style={fill=white,inner sep=2pt}]
			\matrix (m) [matrix of math nodes, row sep=3em,
			column sep=2.5em, text height=1.5ex, text depth=0.25ex]
			{ G^{(0)}\times\TT & \Sigma & G \\
				G^{(0)}\times \TT & Iso(E) & Aut(\mathcal{K}) \\
			};
			\path[->,font=\scriptsize]
			(m-1-1) edge node[auto] {$  $} (m-1-2)
			
			(m-2-1) edge node[auto] {$  $} (m-2-2)
			
			(m-1-1) edge node[auto] {$ \id $} (m-2-1)
			(m-1-2) edge node[auto] {$ \rho $} (m-2-2)
			(m-1-2) edge node[auto] {$ j $} (m-1-3)
						(m-2-2) edge node[auto] {$ \mathrm{Ad} $} (m-2-3)
						(m-1-3) edge node[auto] {$ \alpha $} (m-2-3)
			;
			\end{tikzpicture}
		\end{center}	
	
	The following result seems to be folklore but we could not locate it in the literature, so we provide a proof.
	\begin{prop}\label{Prop:LocalSections}
		Let $G^{(0)}\times\TT\stackrel{i}{\longrightarrow} \Sigma\stackrel{j}{\longrightarrow} G$ be a twist over an étale groupoid $G$. Then $j$ admits local cross sections in the sense that for every $g\in G$ there exists a neighbourhood $U\subseteq G$ of $g$ and a continuous map $s:U\rightarrow \Sigma$ such that $j(s(h))=h$ for all $h\in U$.
	\end{prop}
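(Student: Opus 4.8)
The plan is to transport the local sections of $\mathrm{Ad}:Iso(E)\to Aut(\mathcal{K})$ furnished by \cite[Proposition~3.4]{VanErp} to $j:\Sigma\to G$ through the commutative diagram above, in which $\mathrm{Ad}\circ\rho=\alpha\circ j$. Fix $g\in G$ and put $P_0:=\alpha(g)\in Aut(\mathcal{K})$. By the cited result there are an open neighbourhood $N$ of $P_0$ and a continuous map $\beta:N\to Iso(E)$ with $\mathrm{Ad}\circ\beta=\id_N$. Since $\alpha$ is continuous, $U:=\alpha^{-1}(N)$ is an open neighbourhood of $g$, and $\beta\circ\alpha:U\to Iso(E)$ is a continuous lift of $\alpha|_U$ along $\mathrm{Ad}$. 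The task is to convert this into a continuous lift of $\id_U$ along $j$.

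First I would record that $\mathrm{Ad}$ exhibits $Iso(E)$ as a principal $\TT$-bundle: the subgroupoid $G^{(0)}\times\TT$ acts freely by $z\cdot(x,V,y)=(x,zV,y)$, the orbits are exactly the fibres of $\mathrm{Ad}$ (because $i$ is a homeomorphism onto $\ker\mathrm{Ad}$), and $\mathrm{Ad}$ is an open surjection. Consequently the section $\beta$ trivialises the bundle over $N$: the map $(P,z)\mapsto z\cdot\beta(P)$ is a homeomorphism $N\times\TT\to\mathrm{Ad}^{-1}(N)$ whose inverse has the form $T\mapsto(\mathrm{Ad}(T),\chi(T))$ for a continuous $\chi:\mathrm{Ad}^{-1}(N)\to\TT$ characterised by $T=\chi(T)\cdot\beta(\mathrm{Ad}(T))$. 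Since $\mathrm{Ad}(\rho(\sigma))=\alpha(j(\sigma))$, every $\sigma\in j^{-1}(U)$ has $\rho(\sigma)\in\mathrm{Ad}^{-1}(N)$, so $c:=\chi\circ\rho:j^{-1}(U)\to\TT$ is a well-defined continuous map satisfying $\rho(\sigma)=c(\sigma)\cdot\beta(\alpha(j(\sigma)))$.

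The section itself would then be obtained by descent. Define $\tilde s:j^{-1}(U)\to\Sigma$ by $\tilde s(\sigma)=\overline{c(\sigma)}\cdot\sigma$, which is continuous because the $\TT$-action and $c$ are. Using $\rho(w\sigma)=w\rho(\sigma)$ together with freeness of the $\TT$-action on $Iso(E)$, one checks the cocycle identity $c(w\sigma)=wc(\sigma)$ for $w\in\TT$, whence $\tilde s(w\sigma)=\tilde s(\sigma)$; thus $\tilde s$ is constant on the $\TT$-orbits, which are precisely the fibres of $j$. As $j$ is an open continuous surjection it is a quotient map, so $\tilde s$ descends to a continuous map $s:U\to\Sigma$ with $s\circ j=\tilde s$. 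Finally $j(s(h))=j(\tilde s(\sigma))=j(\sigma)=h$ for any $\sigma\in j^{-1}(h)$, so $s$ is the desired local cross section.

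The set-theoretic existence of a section is immediate, so I expect the genuine obstacle to be \emph{continuity}, and the device that overcomes it is the twofold use of the principal $\TT$-bundle structure: the local section of $\mathrm{Ad}$ produces a continuous $\TT$-valued coordinate $c$ on $\Sigma$, and the quotient-map property of the open surjection $j$ then turns the manifestly continuous, $\TT$-invariant map $\tilde s$ into the sought section. The two points needing care are the verification that $\mathrm{Ad}$ is genuinely a principal bundle (so that $\beta$ trivialises it and $\chi$ is continuous) and the cocycle identity $c(w\sigma)=wc(\sigma)$; both follow from the freeness of the $\TT$-actions and the relation $\mathrm{Ad}\circ\rho=\alpha\circ j$.
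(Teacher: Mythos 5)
Your argument is correct, and while it shares the paper's starting point --- the van Erp--Williams machinery $\rho:\Sigma\to Iso(E)$, $\alpha:G\to Aut(\mathcal{K})$ and the local sections of $\mathrm{Ad}$ --- the way you transport the section across the commutative diagram is genuinely different. The paper proves that $\varphi=(\rho,j):\Sigma\to\Sigma'$ is an isomorphism of topological groupoids onto the pullback groupoid $\Sigma'=\lbrace (V,g)\mid \mathrm{Ad}\,V=\alpha_g\rbrace$, which requires three separate verifications: injectivity (using that the functions in $C_c(G;\Sigma)$ separate the points of a fibre), surjectivity, and openness (a net argument invoking \cite[Proposition~1.15]{Williams}); local sections of $j$ then come for free, since any pullback of a map admitting local sections again admits local sections. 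You never identify $\Sigma$ with the pullback: instead you locally trivialise the principal $\TT$-bundle $\mathrm{Ad}$ over $N$, pull the fibre coordinate back through $\rho$ to obtain the continuous map $c=\chi\circ\rho$, and descend the $\TT$-invariant map $\tilde s(\sigma)=\overline{c(\sigma)}\cdot\sigma$ along the quotient map $j$. This buys you a leaner proof --- the injectivity step and the net-based openness step disappear entirely --- whereas the paper's route buys the stronger structural statement that the twist $\Sigma$ \emph{is} the pullback of $Iso(E)\to Aut(\mathcal{K})$ along $\alpha$. One point in your write-up should be sharpened: the continuity of $\chi$ does not follow from freeness of the $\TT$-actions alone (freeness only gives well-definedness and the identity $\chi(wT)=w\chi(T)$). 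To get continuity, write
$$\chi(T)=\mathrm{pr}_{\TT}\Bigl(i^{-1}\bigl(\beta(\mathrm{Ad}(T))^{-1}\,T\bigr)\Bigr),$$
note that $\beta(\mathrm{Ad}(T))^{-1}T$ lies in $\ker\mathrm{Ad}$, and invoke the facts that $Iso(E)$ is a topological groupoid (so inversion and multiplication are continuous) and that $i$ is a homeomorphism onto $\ker\mathrm{Ad}$; both are contained in the cited results of \cite{VanErp}. With that insertion, every step of your argument is justified.
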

	\begin{proof}
		Let $E$ be the Hilbert $C_0(G^{(0)})$-module associated with $\Sigma$ and $Iso(E)$ and $Aut(\mathcal{K})$ as above. Since the canonical map $\mathrm{Ad}:Iso(E)\rightarrow Aut(\mathcal{K})$ has local cross sections, so does every pullback along this map. Hence it suffices to show that $\Sigma$ is isomorphic to the pullback groupoid $$\Sigma'=\lbrace (V,g)\mid g\in G, V:E_{d(g)}\rightarrow E_{r(g)}\textit{ unitary with } \mathrm{Ad}\ V=\alpha_g\rbrace.$$ 
		
		First of all, there exists a canonical groupoid homomorphism $\varphi:\Sigma\rightarrow \Sigma'$ given by $\varphi(\sigma):=(\rho(\sigma),j(\sigma))$.
		For the injectivity assume that $\varphi(\sigma)=\varphi(\sigma')$. Then $j(\sigma)=j(\sigma')$, so using exactness we get that there exists a $z\in\TT$ such that $\sigma'=z\sigma$. From the definition of $\rho$ we get that for any $f\in C_c(G;\Sigma)$ and any $\tau\in \Sigma$ such that $d(\tau)=r(\sigma)=r(\sigma')$ we have $f(\tau\sigma)=f(\tau\sigma')$. In particular, we get $f(\sigma)=f(\sigma')=zf(\sigma)$ for all $f\in C_c(G;\Sigma)$. We conclude that $z=1$ and hence $\sigma=\sigma'$ as desired.
		
		For the surjectivity let $(V,g)\in \Sigma'$. First choose $\sigma_0\in \Sigma$ with $j(\sigma_0)=g$. Then $\mathrm{Ad}\ \rho(\sigma_0)=\alpha_{j(\sigma_0)}=\alpha_g=\mathrm{Ad}\ V$ and hence $\rho(\sigma_0)=z V$ for some $z\in\TT$. Now put $\sigma:=\overline{z}\sigma_0$. Then $j(\sigma)=g$ and $\rho(\sigma)=\overline{z}\rho(\sigma_0)=V$ and hence $\varphi(\sigma)=(V,g)$ as desired.
		
		Since $\varphi$ is clearly continuous it remains to prove that it is also open. To see this we will employ \cite[Proposition~1.15]{Williams}. So let $(V_\lambda,g_\lambda)_\lambda$ be a net converging to $\varphi(\sigma)=(\rho(\sigma),j(\sigma))$ in $\Sigma'$. In particular we get $g_\lambda\rightarrow j(\sigma)$ in $G$. Since $j$ is an open surjection we can pass to a subnet and relabel to assume that there exists a net $(\sigma_\lambda)$ in $\Sigma$ such that $j(\sigma_\lambda)=g_\lambda$ and $\sigma_\lambda\rightarrow \sigma$.
		Moreover, $\mathrm{Ad}\ \rho(\sigma_\lambda)=\alpha_{j(\sigma_\lambda)}=\mathrm{Ad}\ V_\lambda$ and consequently $\rho(\sigma_\lambda)=z_\lambda V_\lambda$ for a suitable net $(z_\lambda)_\lambda$ in $\TT$. Now since $\TT$ is compact, we can pass to yet another subnet (and relabel) to assume that the net $(z_\lambda)_\lambda$ converges to some $z\in \TT$. Since $\rho(\sigma)=\lim_\lambda \rho(\sigma_\lambda)=\lim_\lambda z_\lambda V_\lambda=z\rho(\sigma)$, we get $z=1$.
		To sum up we found a (sub)net of $(V_\lambda,g_\lambda)$ and a net $(\overline{z_\lambda}\sigma_\lambda)_\lambda$ in $\Sigma$ converging to $\sigma$ such that $\varphi(\overline{z}_\lambda\sigma_\lambda)=(\rho(\overline{z}_\lambda\sigma_\lambda),j(\sigma_\lambda))=(V_\lambda,g_\lambda)$.
	\end{proof}
	
	\begin{kor}\label{Cor:TwistsOnAmpleGroupoidsAreContinuous}
	Every twist $\Sigma$ over a $\sigma$-compact ample groupoid $G$ is continuous.
	\end{kor}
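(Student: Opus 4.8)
The plan is to upgrade the local cross sections furnished by Proposition~\ref{Prop:LocalSections} to a single \emph{global} continuous cross section, exploiting the fact that an ample groupoid admits a basis of compact open sets.

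First I would record that, since $G$ is étale and $G^{(0)}$ is totally disconnected, the groupoid $G$ itself is a locally compact Hausdorff totally disconnected space: near any $g\in G$ the domain map $d$ restricts to a homeomorphism of an open bisection $B$ onto the open set $d(B)\subseteq G^{(0)}$, and since $G^{(0)}$ has a basis of compact open sets, pulling such a set back through $d|_B$ yields a compact open neighbourhood of $g$ in $G$. Hence $G$ has a basis of compact open sets. Now for each $g\in G$, Proposition~\ref{Prop:LocalSections} supplies an open neighbourhood $U_g$ and a continuous section $s_g\colon U_g\to\Sigma$ with $j\circ s_g=\id$; shrinking $U_g$ to a compact open set $V_g\subseteq U_g$ containing $g$ gives a continuous section $s_g|_{V_g}$ over $V_g$. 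The family $\{V_g\}_{g\in G}$ is an open cover of $G$ by compact open sets, and since $G$ is $\sigma$-compact, hence Lindelöf, I can extract a countable subcover $\{V_n\}_{n\in\NN}$ with associated continuous sections $s_n\colon V_n\to\Sigma$.

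Then I would disjointify the cover: set $W_1:=V_1$ and $W_n:=V_n\setminus(V_1\cup\dots\cup V_{n-1})$ for $n\geq 2$. Each finite union $V_1\cup\dots\cup V_{n-1}$ is compact and open, hence clopen as $G$ is Hausdorff, so every $W_n$ is clopen and, being a closed subset of the compact set $V_n$, is itself compact. The sets $W_n$ are pairwise disjoint with $\bigcup_n W_n=\bigcup_n V_n=G$. Finally I define $s\colon G\to\Sigma$ by $s|_{W_n}:=s_n|_{W_n}$. This is well defined by disjointness and satisfies $j\circ s=\id_G$. Since each $W_n$ is open and $s$ coincides there with the continuous map $s_n$, the map $s$ is continuous at every point of $G$; thus $j$ admits a global continuous cross section, and $\Sigma$ is a continuous twist.

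The argument is almost entirely organisational, and the one place where care is needed—and the reason total disconnectedness is essential—is the gluing step: the clopen partition $\{W_n\}$ allows the local sections to be assembled with no compatibility condition on overlaps, which is precisely what would obstruct the same construction for a merely étale groupoid.
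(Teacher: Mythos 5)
Your proof is correct and takes essentially the same approach as the paper's: obtain compact open domains for the local sections from Proposition~\ref{Prop:LocalSections}, extract a countable subcover using $\sigma$-compactness, disjointify into a clopen partition, and glue the sections along it. The only difference is that you spell out the details (the compact open basis for $G$ coming from the \'etale structure, and the explicit inclusion-exclusion disjointification) that the paper's proof dismisses as standard.
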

	\begin{proof}
		By the previous proposition the map $j:\Sigma\rightarrow G$ admits local cross sections. Combining this with the assumptions that $G$ is $\sigma$-compact and totally disconnected, we can find a countable covering of $G=\bigcup U_n$ by compact open subsets which are the domains of local sections of $j$. Using a standard inclusion-exclusion argument we can recursively define a partition $G=\bigsqcup_{n\in\NN} V_n$ into compact open subsets $V_n$, which are the domains of local sections $s_n:V_n\rightarrow \Sigma$ of $j$. Then we can piece all of these sections together in the obvious way to obtain a global continuous section $s:G\rightarrow \Sigma$ of $j$ as desired.
	\end{proof}
	
	Following \cite{MR1174207}, we associate a $\mathrm{C}^*$-algebra to a twist $\Sigma$ over an étale groupoid $G$ as follows:
	Consider the complex vector space
	$$C_c(G;\Sigma):=\lbrace f\in C_c(\Sigma)\mid f(z\sigma)=zf(\sigma)\rbrace.$$ 
	Then $C_c(G;\Sigma)$ becomes a $\ast$-algebra with repect to the operations
	$$f_1\ast f_2(\sigma)=\sum\limits_{j(\tau)\in G^{r(\sigma)}} f_1(\tau)f_2(\tau^{-1}\sigma)\text{ and }f^*(\sigma)=\overline{f(\sigma^{-1})}.$$
	Observe that the sum makes sense, since the expression $f_1(\tau)f_2(\tau^{-1}\sigma)$ only depends on $j(\tau)\in G$. 
	For each $u\in G^{(0)}$ let $E_u$ be the fibre of the Hilbert $C_0(G^{(0)})$-module $E$ associated with $\Sigma$ as above.
	Then, for $f\in C_c(G;\Sigma)$ we can define an operator $\pi_u(f)$ on $E_u$ by 	$\pi_u(f)\xi)=f\ast\xi$. The operator $\pi_u(f)$ is bounded and we define $C_r^*(G;\Sigma)$ to be the completion of $C_c(G;\Sigma)$ with respect to the norm
	$$\norm{f}_r:=\sup\limits_{u\in G^{(0)}} \norm{\pi_u(f)}.$$

	\begin{bem}
		In the literature one often finds a direct construction of the twisted groupoid $\mathrm{C}^*$-algebra associated to a continuous $2$-cocycle $\omega$, that does not pass through the canonical extension $\Sigma_\omega$ explained above. It is defined as a completion of the convolution algebra $C_c(G)$ with product and involution given by
		$$f_1\ast_\omega f_2(g)=\sum\limits_{h\in G^{r(g)}} f_1(h)f_2(h^{-1}g)\omega(h,h^{-1}g)\text{ and }$$
		$$f^*(g)=\overline{f(g^{-1})\omega(g,g^{-1})},$$
		and we will denote it by $C_r^*(G,\omega)$.
		Note, that both constructions yield the same $\mathrm{C}^*$-algebras, since there is a canonical isomorphism $\Phi:C_r^*(G,\Sigma_\omega)\rightarrow C_r^*(G,\omega)$, given by $\Phi(f)(g)=f(g,1)$. One can easily define an inverse map $\Psi:C_r^*(G,\omega)\rightarrow C_r^*(G,\Sigma_\omega)$ by $\Psi(f)(g,z)=zf(g)$.
	\end{bem}

	We shall need the following version of the Packer-Raeburn stabilisation trick in the groupoid framework.
	\begin{prop}\label{Prop:PackerRaeburn}\cite[Proposition~5.1]{VanErp}
		Let $\Sigma$ be a twist over an étale groupoid $G$. If $E$ is the associated Hilbert $C_0(G^{(0)})$-module and $\alpha:G\rightarrow Aut(\mathcal{K})$ is the homomorphism constructed above, then $\alpha$ defines a groupoid dynamical system $(K(E),G,\alpha)$ such that $K(E)\rtimes_{\alpha,r} G$ is Morita equivalent to $C_r^*(G;\Sigma)$.
	\end{prop}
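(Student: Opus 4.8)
The plan is to prove the Morita equivalence by exhibiting an explicit imprimitivity bimodule between $A:=K(E)\rtimes_{\alpha,r}G$ and $B:=C_r^*(G;\Sigma)$, realising the groupoid analogue of the Packer--Raeburn stabilisation trick. First I would dispose of the preliminary claim that $(K(E),G,\alpha)$ is a genuine groupoid dynamical system. Since $K(E)$ is a $C_0(G^{(0)})$-algebra with fibres $K(E)_u=K(E_u)$ and $\alpha_{j(\sigma)}=\mathrm{Ad}\,\rho(\sigma)$ with $\rho(\sigma)\colon E_{d(\sigma)}\rightarrow E_{r(\sigma)}$ unitary, each $\alpha_g$ is a $\ast$-isomorphism $K(E_{d(g)})\rightarrow K(E_{r(g)})$; the relation $\alpha_{gh}=\alpha_g\circ\alpha_h$ follows from $\rho$ being a representation of $\Sigma$ (a short check gives $\rho(\sigma)\rho(\tau)=\rho(\sigma\tau)$) together with the fact that $\mathrm{Ad}$ annihilates the central $\TT$, so that $\alpha$ descends to a well-defined map on $G$; and continuity is exactly the content of the commutative diagram constructed above, in which $\alpha\colon G\rightarrow Aut(\mathcal{K})$ is continuous.

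The conceptual input for the equivalence is that $E$ is a $K(E)$--$C_0(G^{(0)})$-imprimitivity bimodule, and that the fibrewise unitaries $\rho(\sigma)$ endow $E$ with a $\Sigma$-equivariant structure that simultaneously implements $\alpha$ on $K(E)$ and covers the tautological action of $G$ on $C_0(G^{(0)})$. The scalar side of this equivariance is untwisted, but the ambiguity $\rho(z\sigma)=z\rho(\sigma)$ means the equivariance itself is only twisted by $\Sigma$; upon passing to crossed products this residual twist is precisely what turns the coefficient algebra $C_0(G^{(0)})$, with its tautological $G$-action, into the twisted convolution algebra $C_r^*(G;\Sigma)$. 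Concretely, I would take as pre-imprimitivity bimodule a space $X_0$ of compactly supported $\TT$-equivariant sections on which $\Gamma_c(G,r^*\mathcal{K})$ acts on the left by convolution combined with the fibrewise operators, and $C_c(G;\Sigma)$ acts on the right by twisted convolution, equipping $X_0$ with a $\Gamma_c(G,r^*\mathcal{K})$-valued inner product ${}_A\langle\cdot,\cdot\rangle$ assembled from the rank-one operators $\theta_{\rho(\sigma)\xi,\eta}$ and a $C_c(G;\Sigma)$-valued inner product $\langle\cdot,\cdot\rangle_B$ built from twisted convolution. One then checks the algebraic imprimitivity-bimodule axioms --- associativity of the two actions, symmetry, and the crucial compatibility ${}_A\langle\xi,\eta\rangle\cdot\zeta=\xi\cdot\langle\eta,\zeta\rangle_B$ --- all of which reduce to the homomorphism property of $\rho$ and the defining relation $\rho(z\sigma)=z\rho(\sigma)$.

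The hard part is not this algebraic bookkeeping but controlling the completions: one must verify that the closure of $X_0$ in the norm coming from either inner product yields the \emph{reduced} algebras $A$ and $B$, rather than their full counterparts. I would handle this by passing to the associated linking algebra and realising all operators on the concrete Hilbert modules underlying the fibrewise regular representations $\pi_u$ indexed by $u\in G^{(0)}$: one checks that ${}_A\langle\cdot,\cdot\rangle$ and $\langle\cdot,\cdot\rangle_B$ are positive with dense ranges (fullness on both sides), and that the representations of $A$ and $B$ induced on $X$ are spatially the regular ones, so that the completion norms agree with $\norm{\cdot}_r$ on each side. Equivalently, one can invoke compatibility of Rieffel induction with the reduced norms, using that the regular representations on the two sides are induced from the same family of fibre representations; an alternative is to upgrade the bimodule to a genuine Packer--Raeburn stable isomorphism via an untwisting unitary built from $\rho$. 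Once positivity, fullness and the norm identification are in place, the completion of $X_0$ is an $A$--$B$-imprimitivity bimodule and the asserted Morita equivalence follows.
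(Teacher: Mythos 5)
Your proposal is correct in outline, and in fact the bimodule you describe is exactly the one the paper records after the statement: $X_0=\Gamma_c(G,d^*\mathcal{E})$ with an $\Gamma_c(G,r^*\mathcal{K})$-valued inner product on the left and a $C_c(G;\Sigma)$-valued twisted-convolution inner product on the right. But note that the paper does not prove this proposition at all: it is imported verbatim from van Erp--Williams \cite[Proposition~5.1]{VanErp}, and the proof there is not a hand verification of the imprimitivity axioms. Instead, one realises both algebras as (reduced) cross-sectional algebras of Fell bundles --- $r^*\mathcal{K}$ on one side, the complex line bundle $\Sigma\times_\TT\CC$ on the other --- exhibits an equivalence between these bundles (built from $\mathcal{E}$ and the unitaries $\rho(\sigma)$, with $G$ serving as a $(G,G)$-equivalence), and then invokes the Muhly--Williams equivalence theorem \cite[Theorem~6.4]{MR2446021} (together with its reduced counterpart) to conclude. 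The difference matters precisely at the points you yourself flag as ``the hard part'': positivity of the $A$-valued inner product, fullness, and the identification of the completion norms with the \emph{reduced} norms are genuinely non-routine, and your plan handles them only in sketch (``one checks positivity \dots the representations induced on $X$ are spatially the regular ones''). The equivalence-theorem route packages exactly these verifications once and for all, which is why the paper can also reuse it with no extra work when restricting to compact open subgroupoids $H\subseteq G$ in Remark \ref{Rem:PackerRaeburnRestrict}. So: your construction is the right one and your strategy would work if the deferred analytic steps were carried out (essentially reproving the reduced equivalence theorem in this special case), but the proof actually underlying the paper's citation is shorter and more robust because it delegates those steps to the general machinery.
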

	For later reference, we also need to briefly recall the construction of the bimodule that provides the Morita equivalence:	
	Let $A_0$ be the dense subalgebra $\Gamma_c(G,r^*\mathcal{K})\subseteq K(E)\rtimes_{r,\alpha} G$.
	Following \cite[Theorem~6.4]{MR2446021} together with the formulas given in the proof of \cite[Proposition~5.1]{VanErp} one then defines a pre-Hilbert bimodule-structure on $X_0:=\Gamma_c(G,d^*\mathcal{E})$ as follows: For $\xi,\eta\in X_0$ and $f\in A_0$ define
	$$(f\xi)(g)=\sum\limits_{h\in G^{r(g)}} \alpha_{g^{-1}h}(f(h^{-1}))\xi(h^{-1}g),$$
	
	$${}_{A_0}\lk \xi,\eta\rk(g)=\sum\limits_{h\in G^{s(g)}} \alpha_{gh}({}_{K(E_x)}\lk\xi(h),\eta(gh)\rk).$$
	Note that in \cite{VanErp}, the authors construct the crossed product by completing $\Gamma_c(G,s^*\mathcal{K})$. Thus, in order to obtain the formulas above we need to pass through the canonical isomorphism, sending $f\in A_0$ to the function $\check{f}\in \Gamma_c(G,s^*\mathcal{K})$, given by $\check{f}(g):=f(g^{-1})$.
	
	For $\xi,\eta\in X_0$ and $f\in C_c(G;\Sigma)$ define
	$$(\xi f)(g)=\sum\limits_{j(\sigma)\in G^{d(g)}}f(\sigma^{-1})\rho(\sigma)\xi(gj(\sigma))$$
	and a $C_c(G;\Sigma)$-valued inner product by
	$$\lk \xi,\eta\rk (\tau)=\sum\limits_{j(\sigma)\in G^{d(\tau)}}\lk \rho(\sigma^{-1}\tau^{-1})\xi(j(\sigma^{-1}\tau^{-1})),\rho(\sigma^{-1})\eta(j(\sigma^{-1}))\rk (d(\sigma))$$
	The completion $X$ of $X_0$ then implements a Morita equivalence between $K(E)\rtimes_r G$ and $C_r^*(G;\Sigma)$. With this description at hand we can prove the following technical little lemma, which will turn out useful later:
	\begin{lemma}\label{Lemma:PR-Inductive Limit}
	If $(\xi_i)_i$ is a net in $X_0$ converging to $\xi\in X_0$ in the inductive limit topology, then $\norm{\xi_i-\xi}\rightarrow 0$.
	\end{lemma}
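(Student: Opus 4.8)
The plan is to reduce the statement to a single uniform estimate controlled by the supremum norm of the section together with the compactness of its support, and to feed this through the standard domination of the reduced norm by the $I$-norm on an étale groupoid. Recall that $\xi_i\to\xi$ in the inductive limit topology means that there is a \emph{fixed} compact set $K\subseteq G$ containing the supports of $\xi$ and (eventually) of all the $\xi_i$, and that $\xi_i\to\xi$ uniformly. Writing $\zeta_i:=\xi_i-\xi$, each $\zeta_i$ is then supported in $K$ and $\norm{\zeta_i}_\infty:=\sup_{g\in G}\norm{\zeta_i(g)}\to 0$, where the inner norm is that of the Hilbert space fibre $E_{d(g)}$ (as $\zeta_i\in\Gamma_c(G,d^*\mathcal{E})$). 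Since $\norm{\zeta_i}_X^2=\norm{\lk\zeta_i,\zeta_i\rk}_r$, it suffices to produce a constant $C=C(K)$, independent of $i$, with $\norm{\lk\zeta_i,\zeta_i\rk}_r\le C\,\norm{\zeta_i}_\infty^2$.

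First I would pass from the reduced norm to the $I$-norm via $\norm{\cdot}_r\le\norm{\cdot}_I$, which is valid in the twisted setting because $\betrag{f}$ descends to a function on $G$ whenever $f\in C_c(G;\Sigma)$. Next I would estimate the inner product pointwise: applying Cauchy--Schwarz termwise in the defining sum for $\lk\zeta_i,\zeta_i\rk(\tau)$ and using that each $\rho(\cdot)$ is a unitary, the conjugating operators drop out of the fibrewise norms and one obtains, with $t=j(\tau)$ and $F:=\norm{\zeta_i(\cdot)}$, after reindexing by $h=j(\sigma)^{-1}$,
$$\betrag{\lk\zeta_i,\zeta_i\rk(\tau)}\le \sum_{h\in G_{d(\tau)}} F(ht^{-1})\,F(h).$$
Since $F$ is supported in $K$ and bounded by $\norm{\zeta_i}_\infty$, each summand is at most $\norm{\zeta_i}_\infty^2$ times the indicator of the event $h\in K$ and $ht^{-1}\in K$.

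The crux is the counting step, which is where I expect the main obstacle to lie and where the étale structure enters decisively. Summing the displayed bound over $t\in G^u$ and reparametrising by $k:=ht^{-1}$, the contributing pairs $(t,h)$ are in injective correspondence with pairs $(h,k)\in K\times K$ satisfying $d(k)=u$ and $r(h)=r(k)$, since $t=k^{-1}h$ is then determined. It remains to bound the number of such pairs uniformly in $u$, and this is exactly where compactness of $K$ combined with étaleness is used: covering $K$ by finitely many, say $n$, open bisections, every fibre $G_u$ and every co-fibre $G^u$ meets $K$ in at most $n$ points, so there are at most $n^2$ admissible pairs for each $u$. A symmetric argument controls the sum over $t\in G_u$, giving $\norm{\lk\zeta_i,\zeta_i\rk}_I\le n^2\,\norm{\zeta_i}_\infty^2$ and hence $\norm{\zeta_i}_X\le n\,\norm{\zeta_i}_\infty\to 0$, as required. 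The only delicate points are the index bookkeeping in the reparametrisation and the verification that the bisection bound is genuinely uniform in $u$; the remaining estimates are routine.
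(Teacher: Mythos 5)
Your proof is correct, and it is essentially the mirror image of the paper's argument: both run the same engine (fibrewise Cauchy--Schwarz with the unitaries $\rho(\cdot)$ discarded, a fixed compact support, and a uniform bound on how many points of a fibre can meet that support), but on opposite sides of the bimodule. You estimate the right, $C_c(G;\Sigma)$-valued inner product $\lk\zeta_i,\zeta_i\rk$, dominate $\norm{\cdot}_r$ by the $I$-norm, and carry out the counting explicitly via a cover of $K$ by $n$ open bisections, arriving at the quantitative bound $\norm{\zeta_i}\le n\,\norm{\zeta_i}_\infty$. The paper instead sets $\eta_i:=\xi-\xi_i$ and works with the left, $A_0$-valued inner product ${}_{A_0}\lk \eta_i,\eta_i\rk\in\Gamma_c(G,r^*\mathcal{K})$: it shows this tends to zero in the inductive limit topology (support control via compactness of $\lbrace g\in G\mid g^{-1}K\cap K\neq\emptyset\rbrace$, smallness via Cauchy--Schwarz and $M:=\sup_u\betrag{K^u}$), and then cites the standard fact that inductive limit convergence implies reduced-norm convergence in a groupoid crossed product. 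Since $X_0$ is a pre-imprimitivity bimodule, $\norm{{}_{A_0}\lk\eta,\eta\rk}=\norm{\lk\eta,\eta\rk}_r$, so the two routes compute the same quantity. What each buys: the paper's version outsources the final counting to a known crossed-product fact and is correspondingly shorter; yours is more self-contained on the twisted-algebra side and makes the constant explicit, at the price of invoking the (equally standard, but likewise unproved in the paper) domination $\norm{\cdot}_r\le\norm{\cdot}_I$ for $C_c(G;\Sigma)$. Both citations are legitimate, so neither route has a gap.
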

	\begin{proof}
	Let $\eta_i:=\xi-\xi_i\in X_0$. We will show, that ${}_{A_0}\lk \eta_i,\eta_i\rk$ converges to zero in the inductive limit topology. Then it will also converge to zero in the reduced norm and hence $\norm{\xi-\xi_i}^2=\norm{{}_{A_0}\lk \eta_i,\eta_i\rk}\rightarrow0$ as desired.
	By assumption, there exists a compact subset $K\subseteq G$ such that $supp(\eta_i)\subseteq K$ for all $i$. Since the action of $G$ on itself by multiplication is always proper, the set $C:=\lbrace g\in G\mid g^{-1}K\cap K\neq\emptyset\rbrace$ is also compact. Now if $0\neq {}_{A_0}\lk \eta_i,\eta_i\rk(g)=\sum_{h\in G^{d(g)}}\alpha_{gh}(\lk \eta_i(h),\eta_i(gh)\rk)$, there exists some $h\in G^{d(g)}$ such that $\lk \eta_i(h),\eta_i(gh)\rk\neq 0$.
	But then necessarily $h\in g^{-1}K\cap K$, which implies $g\in C$. Thus $supp({}_{A_0}\lk \eta_i,\eta_i\rk)\subseteq C$ for all $i$.
	Now let $\varepsilon>0$ be given. Choose $M>0$ such that $\sup_{u\in G^{(0)}}\betrag{K^u}\leq M$. Then we have $\sup_{g\in G}\norm{\lk \eta_i(g),\eta_i(g)\rk}<\frac{\sqrt{\varepsilon}}{M}$ for $i$ large enough.
	For $i$ large enough we can then compute 
	\begin{align*}
	\norm{\lk \eta_i,\eta_i\rk(g)}&\leq \sum\limits_{h\in G^{d(g)}}\norm{\lk \eta_i(h),\eta_i(gh)}\\
	&\leq \sum\limits_{h\in G^{d(g)}}\norm{\lk \eta_i(h),\eta_i(h)}\norm{\lk \eta_i(gh),\eta_i(gh)}<\varepsilon,
	\end{align*}
	which finishes the proof.
	\end{proof}
	\begin{bem}\label{Rem:PackerRaeburnRestrict}
		Let $\Sigma$ be a twist over the étale groupoid $G$ and $H\subseteq G$ a compact open subgroupoid. Then $\Sigma':=j^{-1}(H)$ is easily seen to be a twist over $H$.
		Let $E$ and $\alpha$ be as above. Then we can restrict the action $\alpha$ to an action of $H$ on $K(E)_{\mid H}$. We claim that the resulting crossed product $K(E)_{\mid H}\rtimes_r H$ is then Morita equivalent to $C_r^*(H;\Sigma')$. The proof is basically the same as in \cite[Proposition~5.1]{VanErp}, we just restrict all the appearing bundles to the subgroupoid $H$ and use the fact that $H$ is an $(H,H)$-equivalence.
	\end{bem}
	\section{Homotopies of twists}\label{Section:MainSection}
	Our goal is to prove that the $\K$-theory of $C_r^*(G;\Sigma)$ only depends on the homotopy class of $\Sigma$. We will start by formalizing what we mean by a homotopy:
	Given a locally compact Hausdorff groupoid $G$, consider the trivial bundle of groupoids $G\times[0,1]$ with the product topology. This bundle is itself a locally compact groupoid, where $(g,s)$ and $(h,t)$ are composable if $g$ and $h$ are composable in $G$ and $s=t$. In this case we define their product by $(g,s)(h,s):=(gh,s)$ and an inverse by $(g,s)^{-1}:=(g^{-1},s)$. Consequently, the unit space is given by $G^{(0)}\times [0,1]$.
	
	\begin{defi}
		A (continuous) twist $\Sigma$ over $G\times[0,1]$ is called a (continuous) \textit{homotopy of twists} for $G$.
	\end{defi}
	If $\Sigma$ is a homotopy of twists over $G$ then $\Sigma$ is a continuous field of groupoids over $[0,1]$ in the sense of \cite[Definition~8.9]{Delaroche} since for all $\sigma\in\Sigma$ we have $pr_{[0,1]}(d(\sigma))=pr_{[0,1]}(r(\sigma))$.
	 In particular, for each $t\in[0,1]$ we obtain a twist $\Sigma_t$ over $G$ by letting $\Sigma_t:=(pr_{[0,1]}\circ r)^{-1}(\Sigma)=\Sigma_{\mid G^{(0)}\times \lbrace t\rbrace}$.
	 
	If $G$ is ample, then each $\Sigma_t$ is a continuous twist by Corollary \ref{Cor:TwistsOnAmpleGroupoidsAreContinuous}, so one may wonder if $\Sigma$ is always continuous as a twist over $G\times [0,1]$. Although $G\times [0,1]$ is no longer ample, this can be arranged by a standard patching argument:
	
	\begin{prop}\label{Prop:HomotopiesAreContinuous}
		Let $G$ be a $\sigma$-compact ample groupoid and $\Sigma$ be a homotopy of twists for $G$. Then $\Sigma$ is automatically a continuous homotopy of twists, in the sense that there exists a continuous section $s:G\times [0,1]\rightarrow \Sigma$.
	\end{prop}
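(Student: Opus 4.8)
The plan is to reduce the problem to producing a continuous section over a tube $U\times[0,1]$ with $U\subseteq G$ compact open, and then to glue these along the totally disconnected $G$-direction exactly as in the proof of Corollary \ref{Cor:TwistsOnAmpleGroupoidsAreContinuous}. First I would note that $G\times[0,1]$ is again étale: its domain map is $d\times\id_{[0,1]}$, the product of a local homeomorphism with a homeomorphism, hence a local homeomorphism. Thus Proposition \ref{Prop:LocalSections} applies and $j:\Sigma\rightarrow G\times[0,1]$ admits local continuous cross sections. Writing $G=\bigsqcup_{n\in\NN}V_n$ as a partition into compact open sets as in Corollary \ref{Cor:TwistsOnAmpleGroupoidsAreContinuous}, it then suffices to build a continuous section of $j$ over each $V_n\times[0,1]$: since the $V_n$ are clopen and pairwise disjoint, such sections glue to a global section on all of $G\times[0,1]$.

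So I would fix a compact open $V\subseteq G$ and a point $g\in V$. For each $t\in[0,1]$, a local section around $(g,t)$ may, after shrinking, be taken on a basic neighbourhood $W_t\times J_t$ with $W_t\subseteq V$ compact open (using that $G^{(0)}$ is totally disconnected) and $J_t$ relatively open in $[0,1]$. Since $[0,1]$ is compact, finitely many $J_{t_1},\dots,J_{t_k}$ cover it; setting $W:=\bigcap_{i=1}^k W_{t_i}$ gives a compact open neighbourhood of $g$. A Lebesgue-number argument then provides a subdivision $0=\tau_0<\tau_1<\dots<\tau_m=1$ so that each $[\tau_{l-1},\tau_l]$ lies in some $J_{t_{i(l)}}$, yielding continuous sections $s_l$ of $j$ on the slabs $W\times[\tau_{l-1},\tau_l]$.

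The crux is to patch the slabs $s_l$ across the seams $W\times\lbrace\tau_l\rbrace$, and this is precisely where the argument of Corollary \ref{Cor:TwistsOnAmpleGroupoidsAreContinuous} breaks down, since $[0,1]$ is connected and cannot be partitioned into clopen pieces. At a seam one has $j(s_{l+1}(h,\tau_l))=(h,\tau_l)=j(s_l(h,\tau_l))$, so by exactness, together with the fact that $i$ is a homeomorphism onto its image, there is a continuous map $\lambda_l:W\rightarrow\TT$ with $s_{l+1}(h,\tau_l)=\lambda_l(h)\cdot s_l(h,\tau_l)$, where $\cdot$ denotes the canonical continuous $\TT$-action on $\Sigma$. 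Replacing $s_{l+1}$ on $W\times[\tau_l,\tau_{l+1}]$ by the section $(h,t)\mapsto\overline{\lambda_l(h)}\cdot s_{l+1}(h,t)$ produces a continuous section that agrees with $s_l$ at $\tau_l$; iterating over $l$ assembles a continuous section of $j$ on all of $W\times[0,1]$. The hard part will be verifying that $\lambda_l$ is genuinely continuous and that multiplication by it preserves continuity, but this follows from continuity of the $\TT$-action on $\Sigma$ and of the sections $s_l$.

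Finally I would cover the compact set $V$ by finitely many such neighbourhoods $W$ and refine to a partition of $V$ into compact open pieces (again using total disconnectedness); since these pieces are clopen and disjoint, the corresponding sections glue to a continuous section over $V\times[0,1]$. Gluing over the partition $G=\bigsqcup_n V_n$ then gives the desired global section $s:G\times[0,1]\rightarrow\Sigma$. In summary, all the work in the $G$-direction is handled by the ample structure exactly as in the corollary, and I expect the only real obstacle to be the patching over the connected parameter interval via the transition functions $\lambda_l$.
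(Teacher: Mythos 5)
Your proposal is correct and follows essentially the same route as the paper's proof: local sections from Proposition \ref{Prop:LocalSections} applied to the \'etale groupoid $G\times[0,1]$, a compactness/subdivision argument in the $[0,1]$-direction, patching of adjacent slabs by continuous $\TT$-valued transition functions (your $\lambda_l$ is exactly the paper's preliminary claim with $f(g)=s_1(g,b)s_2(g,b)^{-1}$), and gluing in the $G$-direction via total disconnectedness and $\sigma$-compactness as in Corollary \ref{Cor:TwistsOnAmpleGroupoidsAreContinuous}. The only differences are cosmetic, such as the order in which the partition of $G$ and the tube construction are performed.
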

	\begin{proof}
		We start to prove a preliminary claim:
		Suppose $V\subseteq G$ is an open subset and $0\leq a<b<c\leq 1$ are such that there exist continuous sections $s_1:V\times [a,b]\rightarrow \Sigma$ and $s_2:V\times [b,c]\rightarrow \Sigma$. Then we claim that there exists a continuous section $s:V\times [a,c]\rightarrow \Sigma$.
		Indeed, since $s_1$ and $s_2$ are continuous sections, we can define a continuous map $f:V\rightarrow \TT$ by $f(g)=s_1(g,b)s_2(g,b)^{-1}$. Then $\tilde{s}_2(g,t):=f(g)s_2(g,t)$ is still a continuous section, but it has the virtue that $\tilde{s}_2(g,b)=s_1(g,b)$ for all $g\in V$. Hence we can piece $s_1$ and $\tilde{s}_2$ together to obtain the desired function $s:V\times [a,c]\rightarrow \Sigma$.
		
		We now return to the proof of the proposition: Fix $g\in G$ for the moment. For each $t\in[0,1]$ we can apply Proposition \ref{Prop:LocalSections} to find a compact open neighbourhood $V_t$ of $g$ and some open interval $U_t\subseteq [0,1]$ and a continuous section $s_t:V_t\times U_t\rightarrow \Sigma$. Using compactness of $[0,1]$ and intersecting the resulting finite number of neighbourhoods among the $V_t$ we find a finite sequence of numbers $0=t_0<t_1<\ldots <t_n=1$, a compact open neighbourhood $V$ of $g$, and finitely many continuous sections $s_i:V\times [t_{i-1},t_i]\rightarrow \Sigma$. Now we successively apply the first paragraph of this proof to obtain a continuous section $s_g:V\times [0,1]\rightarrow \Sigma$.
		Finally, we use $\sigma$-compactness to proceed as in the proof of Corollary \ref{Cor:TwistsOnAmpleGroupoidsAreContinuous} and piece these sections together to a globally continuous cross section $s:G\times [0,1]\rightarrow \Sigma$.
	\end{proof}
	
	For every $t\in[0,1]$ we obtain a canonical $\ast$-homo\-morphism 
	$$q_t:C_r^*(G\times[0,1];\Sigma)\rightarrow C_r^*(G;\Sigma_t),$$
	which for $f\in C_c(G\times[0,1];\Sigma)$ is
	given by $q_t(f)=f_{\mid \Sigma_t}$. An argument very similar to the proof of Lemma \ref{Lem:ContinuousTwist} shows, that $q_t$ is surjective.
	The main goal of this article is to prove the following result:
	\begin{satz}\label{Thm:Evalutation Induces Isomorphism on K-theory}
		Let $G$ be an ample groupoid, which satisfies the Baum-Connes conjecture with coefficients and let $\Sigma$ be a homotopy of twists for $G$ and $t\in[0,1]$. Then $$(q_{t})_*:\K_*(C_r^*(G\times[0,1];\Sigma))\rightarrow \K_*(C_r^*(G,\Sigma_t))$$
		is an isomorphism.
	\end{satz}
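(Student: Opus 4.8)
The plan is to strip away the twist with the Packer--Raeburn stabilisation trick, turning the problem into one about ordinary reduced crossed products, and then to feed the resulting evaluation map into the Going-Down machinery in order to reduce everything to compact subgroupoids. Concretely, I would first apply Proposition \ref{Prop:PackerRaeburn} to $\Sigma$ over $G\times[0,1]$ and to each $\Sigma_t$ over $G$, obtaining Morita equivalences $C_r^*(G\times[0,1];\Sigma)\sim_M K(E)\rtimes_r(G\times[0,1])$ and $C_r^*(G;\Sigma_t)\sim_M K(E_t)\rtimes_r G$, where $E$ is the Hilbert $C_0(G^{(0)}\times[0,1])$-module attached to $\Sigma$ and $E_t$ its restriction to $G^{(0)}\times\{t\}$. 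A first technical task is to verify that these equivalences intertwine $q_t$ with the evaluation map on crossed products; the explicit form of the Packer--Raeburn bimodule, together with Lemma \ref{Lemma:PR-Inductive Limit} and the restriction statement of Remark \ref{Rem:PackerRaeburnRestrict}, is exactly tailored to this bookkeeping.

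Next I would recast the statement over the single groupoid $G$. Since composability in $G\times[0,1]$ forces the $[0,1]$-coordinates to agree, the algebra $A:=K(E)$ is precisely a $G$-equivariant $C[0,1]$-algebra with fibre $A_t=K(E_t)$, and one has a canonical identification $A\rtimes_r(G\times[0,1])\cong A\rtimes_r G$ under which $q_t$ corresponds to the fibre evaluation $A\rtimes_r G\to A_t\rtimes_r G$ induced by the $G$-equivariant surjection $\mathrm{ev}_t\colon A\to A_t$. It therefore suffices to show that $\mathrm{ev}_t$ induces an isomorphism on $\K_*(\,\cdot\,\rtimes_r G)$. As $G$ satisfies the Baum--Connes conjecture with coefficients, the Going-Down principle from \cite{1806.00391} applies and reduces this to proving that $\mathrm{ev}_t$ restricts to a $\KK^H$-equivalence $A_{\mid H}\to (A_t)_{\mid H}$ for every compact open subgroupoid $H\subseteq G$.

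It remains to treat the compact case directly. A compact open subgroupoid $H$ of an ample groupoid is itself a compact ample groupoid, so its $d$-fibres, and in particular its isotropy groups, are finite. By Corollary \ref{Cor:TwistsOnAmpleGroupoidsAreContinuous} and Proposition \ref{Prop:HomotopiesAreContinuous} the restricted homotopy of twists $\Sigma_{\mid H\times[0,1]}$ is described by a continuous family of normalized $2$-cocycles on $H$. The decisive point is rigidity: the cohomological invariant controlling these cocycles---fibrewise the Schur multiplier class of the finite isotropy groups---takes values in a discrete group, so the family is locally constant up to coboundary. Choosing the connecting coboundaries continuously and globalising over the contractible interval $[0,1]$ shows that $\Sigma_{\mid H\times[0,1]}$ is $H$-equivariantly isomorphic to the constant homotopy $(\Sigma_t)_{\mid H}\times[0,1]$; transporting this through the Packer--Raeburn picture (Remark \ref{Rem:PackerRaeburnRestrict}) yields an $H$-equivariant isomorphism $A_{\mid H}\cong C[0,1]\otimes(A_t)_{\mid H}$ identifying $\mathrm{ev}_t$ with evaluation of a constant field. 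Hence $\mathrm{ev}_t$ is an $H$-equivariant homotopy equivalence and, by homotopy invariance of $\KK^H$, a $\KK^H$-equivalence, as required.

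I expect the principal obstacle to be exactly this compact case: producing the trivialisation uniformly over the totally disconnected but possibly infinite unit space $H^{(0)}$, where the isotropy may jump, and upgrading the fibrewise cohomological rigidity to a genuine continuous family of isomorphisms of twisted algebras rather than a mere fibrewise statement. A secondary point requiring care is checking that the hypotheses of the Going-Down principle are literally met---that $\mathrm{ev}_t$ yields a $\KK^H$-equivalence and not only a $\K$-theory isomorphism of the crossed products---and that the principle is available for the (second countable) groupoid $G$ in the generality needed here.
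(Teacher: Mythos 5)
Your overall architecture is the same as the paper's: Packer--Raeburn stabilisation via Proposition \ref{Prop:PackerRaeburn}, the identification $K(E)\rtimes_r(G\times[0,1])\cong K(E)\rtimes_r G$ through the pushforward action, the verification that the Morita bimodules intertwine $q_t$ with the fibrewise evaluation $\mathrm{ev}_t\colon K(E)\rightarrow K(E_t)$, and the Going-Down reduction to compact open subgroupoids are exactly the steps the paper takes. The divergence, and the genuine gap, is in the compact case, which you yourself identify as the crux. Your argument there rests on the claim that the restricted homotopy of twists over $H\times[0,1]$ can be $H$-equivariantly trivialised, justified by asserting that the relevant cohomological invariant is ``fibrewise the Schur multiplier class of the finite isotropy groups'' and hence discrete. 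This is not a proof, and the justification is incorrect as stated: second cohomology of a groupoid is not computed fibre by fibre from the isotropy groups (the principal part and its interaction with the isotropy contribute as well), and even granting fibrewise rigidity, the passage from a pointwise statement to a single continuous family of trivialising coboundaries $b_s\colon H\rightarrow\TT$ over a Cantor-set unit space with jumping isotropy is precisely the hard content --- which you explicitly leave open. As written, the proof is incomplete at exactly the step where it deviates from the paper.

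The gap is moreover self-inflicted, caused by reading the Going-Down hypothesis as stronger than it is. Theorem 7.10 of \cite{1806.00391} does not ask for $res_H^G(x)$ to be a $\KK^H$-equivalence; it only asks that the Kasparov product with $res_H^G(x)$ be an isomorphism $\KK^H(C(H^{(0)}),A_{\mid H})\rightarrow \KK^H(C(H^{(0)}),B_{\mid H})$. For compact $H$ this group is, by a Green--Julg-type isomorphism, just $\K_*(A_{\mid H}\rtimes H)$, and after the Morita equivalence of Remark \ref{Rem:PackerRaeburnRestrict} (together with the analogue of Lemma \ref{Lem:Evaluation1} for $H$) the hypothesis to be checked becomes that $(q_t)_*\colon \K_*(C_r^*(H\times[0,1];\Sigma))\rightarrow \K_*(C_r^*(H;\Sigma_t))$ is an isomorphism. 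For this a \emph{non-equivariant} homotopy equivalence suffices, and one is available off the shelf: by \cite[Proposition~3.1]{MR3346133} (Proposition \ref{Prop:CompactCase} in the paper), $q_t$ is a homotopy equivalence for any continuous homotopy of twists over a compact Hausdorff groupoid, the continuity being supplied by Proposition \ref{Prop:HomotopiesAreContinuous}. This is exactly how the paper closes the argument; no equivariant trivialisation of the twist is needed, and none is proved. If you replace your rigidity claim by this citation (and weaken your Going-Down input accordingly), your proposal becomes the paper's proof.
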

	
	The following result settles the case of continuous homotopies of twists over compact groupoids.
	\begin{prop}\cite[Proposition~3.1]{MR3346133}\label{Prop:CompactCase} If $\Sigma$ is a continuous homotopy of twists on a compact Hausdorff groupoid $G$, then the canonical $\ast$-homomorphism $q_t:C_r^*(G\times[0,1];\Sigma)\rightarrow C_r^*(G;\Sigma_t)$ is a homotopy equivalence.
	\end{prop}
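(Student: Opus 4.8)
The plan is to pass to the level of continuous $2$-cocycles and then to produce an explicit homotopy equivalence by contracting the interval $[0,1]$ onto the point $t$. Since $G\times[0,1]$ is compact and $\Sigma$ is continuous, Remark \ref{Rem:Cocycles} (after the usual normalisation on units) lets me replace $\Sigma$ by a continuous normalised $2$-cocycle $\omega$ on $G\times[0,1]$. Because composable pairs of $G\times[0,1]$ share their $[0,1]$-coordinate, $\omega$ is the same datum as a continuous family $(\omega_s)_{s\in[0,1]}$ of normalised $2$-cocycles on $G$, and the canonical isomorphisms of Remark \ref{Rem:Cocycles} identify $C_r^*(G\times[0,1];\Sigma)$ with $C_r^*(G\times[0,1],\omega)$, the fibre $C_r^*(G;\Sigma_t)$ with $C_r^*(G,\omega_t)$, and $q_t$ with the restriction map $f\mapsto f(\cdot,t)$. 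On the convolution algebra the product is $(f_1\ast f_2)(g,s)=\sum_{h\in G^{r(g)}}f_1(h,s)\,f_2(h^{-1}g,s)\,\omega_s(h,h^{-1}g)$, so that the entire $s$-dependence is carried by the cocycle.

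Next I would build the homotopy. Let $h_\lambda(s):=(1-\lambda)s+\lambda t$ be the straight-line contraction of $[0,1]$ onto $\{t\}$, so $h_0=\id$, $h_1\equiv t$, and $h_\lambda(t)=t$. For a continuous family $b_\lambda\colon G\times[0,1]\to\TT$ (to be found) set $\Phi_\lambda(f)(g,s):=b_\lambda(g,s)\,f(g,h_\lambda(s))$. Comparing $\Phi_\lambda(f_1\ast f_2)$ with $\Phi_\lambda(f_1)\ast\Phi_\lambda(f_2)$ term by term shows that $\Phi_\lambda$ is multiplicative exactly when, for every $s$, the $1$-cochain $b_\lambda(\cdot,s)$ trivialises the difference of the two cocycles, i.e.
\[
\frac{b_\lambda(a,s)\,b_\lambda(b,s)}{b_\lambda(ab,s)}=\frac{\omega_{h_\lambda(s)}(a,b)}{\omega_s(a,b)}\qquad\text{for all composable }a,b\in G.
\]
Indeed, granting this identity, $\Phi_\lambda$ factors as the pullback $\ast$-homomorphism along the continuous groupoid endomorphism $(g,s)\mapsto(g,h_\lambda(s))$ followed by the $\ast$-isomorphism ``multiplication by $b_\lambda$'', so it extends to $C_r^*(G\times[0,1],\omega)$. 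Imposing the normalisation $b_0\equiv1$ and $b_\lambda(\cdot,t)\equiv1$ (both forced by the identity, since the right-hand side equals $1$ when $\lambda=0$ or $s=t$), the endpoints read off at once: $\Phi_0=\id$, while $\Phi_1(f)(g,s)=b_1(g,s)\,f(g,t)$ factors as $\iota\circ q_t$, where $\iota(\varphi)(g,s):=b_1(g,s)\varphi(g)$ is a $\ast$-homomorphism $C_r^*(G,\omega_t)\to C_r^*(G\times[0,1],\omega)$ by the same identity and satisfies $q_t\circ\iota=\id$. Since $\lambda\mapsto\Phi_\lambda(f)$ is continuous in the inductive limit topology and all supports lie in the compact set $G\times[0,1]$, a continuity estimate in the spirit of Lemma \ref{Lemma:PR-Inductive Limit} upgrades this to norm-continuity. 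Thus $(\Phi_\lambda)_\lambda$ is a homotopy of $\ast$-homomorphisms from $\id$ to $\iota\circ q_t$, and together with $q_t\circ\iota=\id$ this exhibits $q_t$ as a homotopy equivalence with inverse $\iota$.

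The hard part is producing the continuous family of transgressing $1$-cochains $b_\lambda$ solving the displayed equation; equivalently, showing that the slices $\omega_{h_\lambda(s)}$ and $\omega_s$ are cohomologous through a \emph{continuous} choice of primitives normalised at $s=t$. This is a continuous homotopy-invariance statement for degree-two cohomology of $G$ with $\TT$-coefficients, and it is precisely here that compactness of $G$ is indispensable. For an infinite discrete group such as $\ZZ^2$ the class of $\omega_s$ genuinely moves with $s$ — it is the rotation parameter of $\mathcal{A}_\theta$ — so no such $b_\lambda$ exists and $q_t$ is \emph{not} a homotopy equivalence; this is exactly the gap that the Going-Down argument of Section \ref{Section:MainSection} is designed to bridge in the general case. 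For compact $G$ I would construct the $b_\lambda$ by averaging over the groupoid: using the continuity of $\omega$ on $G\times[0,1]$ together with the contractibility of the interval, one integrates the infinitesimal variation of $\omega_{h_\lambda(s)}$ in $s$ against a Haar system to obtain a cochain depending continuously on $(\lambda,s)$ and equal to $1$ at $s=t$. Verifying that this averaged cochain closes up to the prescribed coboundary, uniformly in $\lambda$ and $s$, is the technical heart of the argument and is where the finiteness afforded by compactness is used.
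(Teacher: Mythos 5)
The paper itself gives no proof of this proposition --- it is quoted from Gillaspy \cite[Proposition~3.1]{MR3346133} --- so your attempt has to be measured against the cited argument, whose substance is exactly the ``hard part'' you isolate. Most of your scaffolding is correct and carefully reduced: passing to a normalised continuous $2$-cocycle via Remark \ref{Rem:Cocycles} is legitimate (when $G$ is \'etale and Hausdorff, so is $G\times[0,1]$, and this is the only case the paper ever uses, namely compact open subgroupoids of ample groupoids); your displayed identity is indeed equivalent to multiplicativity of $\Phi_\lambda$; the extension to the reduced algebras works because your factorisation is fibrewise unitarily implemented ($\pi_{(u,s)}(\Phi_\lambda(f))$ is conjugate to $\pi_{(u,h_\lambda(s))}(f)$ by the diagonal unitary of multiplication by $\overline{b_\lambda(\cdot,s)}$); and the endpoint identifications $\Phi_0=\id$, $\Phi_1=\iota\circ q_t$, $q_t\circ\iota=\id$ are right. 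One slip: the normalisations $b_0\equiv 1$ and $b_\lambda(\cdot,t)\equiv 1$ are \emph{not} forced by your identity (it only forces $b_0(\cdot,s)$ and $b_\lambda(\cdot,t)$ to be $\TT$-valued homomorphisms); they have to be built into the construction.

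The genuine gap is that the key claim --- existence of a \emph{jointly continuous} family $(\lambda,s,g)\mapsto b_\lambda(g,s)$ with $\delta b_\lambda(\cdot,s)=\omega_{h_\lambda(s)}\overline{\omega_s}$ --- is never proved, and the sketch you offer would fail as written: ``integrating the infinitesimal variation of $\omega_{h_\lambda(s)}$ in $s$'' presupposes differentiability in $s$, whereas a homotopy of twists is merely continuous, and contractibility of $[0,1]$ by itself proves nothing (it is equally present for $G=\ZZ^2$, where, as you note, the conclusion fails). What compactness actually buys is the following. (i) The pointwise ratio of two $2$-cocycles is again a $2$-cocycle; by uniform continuity of $\omega$ on the compact set $(G\times[0,1])^{(2)}$ there is $\eps>0$ so that for $|s-s'|<\eps$ the cocycle $\omega_{s'}\overline{\omega_s}$ takes values in $\{z\in\TT: \mathrm{Re}\,z>0\}$, hence equals $e^{2\pi i\gamma_{s,s'}}$ for a unique continuous $\gamma_{s,s'}:G^{(2)}\rightarrow(-\tfrac14,\tfrac14)$, which is then an honest $\RR$-valued $2$-cocycle (its additive cocycle identity holds modulo $\ZZ$ and both sides are small). (ii) $\RR$-valued $2$-cocycles on a compact Hausdorff \'etale groupoid are coboundaries by averaging: $u\mapsto|G^u|$ is locally constant (lower semicontinuity from \'etaleness, upper semicontinuity from compactness and Hausdorffness), so the normalised counting measures form a Haar system of probability measures, and $\beta_{s,s'}(g):=|G^{d(g)}|^{-1}\sum_{h\in G^{d(g)}}\gamma_{s,s'}(g,h)$ satisfies $\delta\beta_{s,s'}=\gamma_{s,s'}$ upon integrating the cocycle identity in its third variable and using left invariance; moreover $\beta_{s,s'}$ is continuous in all variables, vanishes on units and vanishes when $s=s'$. (iii) For arbitrary $s,s'$ subdivide $[s,s']$ into finitely many intervals of length $<\eps$ and multiply the cochains $e^{2\pi i\beta}$ along the subdivision; then $b_\lambda(g,s):=$ the cochain attached to the pair $(s,h_\lambda(s))$ has every property you need. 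This continuous-transgression lemma is precisely the content of the compact case in the cited reference (there phrased as an identification of $C_r^*(G\times[0,1];\Sigma)$ with $C([0,1])\otimes C_r^*(G;\Sigma_t)$ compatible with $q_t$); once it is supplied, your homotopy $(\Phi_\lambda)_\lambda$ closes the argument.
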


	The rough idea in proving Theorem \ref{Thm:Evalutation Induces Isomorphism on K-theory} is to use the Going-Down principle to reduce the question to the case of compact groupoids and then appeal to Proposition \ref{Prop:CompactCase} above.
	Before proceeding to the precise form of the Going-Down principle we are going to use, the reader may wish to recall the definitions of Le Gall's $\KK^G$-theory \cite{LeGall99}, the topological $\K$-theory $\K_*^{\mathrm{top}}(G;A)$ of a groupoid with coefficients in a $G$-algebra $A$, and the Baum-Connes assembly map $\mu_A:\K_*^{\mathrm{top}}(G;A)\rightarrow \K_*(A\rtimes_r G)$ \cite{MR1798599}. See also \cite{Bonicke,1806.00391} for a detailed overview.
	\begin{satz}\cite[Theorem~7.10]{1806.00391}
		Let $G$ be an ample, second countable, locally compact Hausdorff groupoid and let $A$ and $B$ be separable $G$-algebras. Suppose there is an element $x\in \KK^G(A,B)$ such that 
		\[ \KK^H(C(H^{(0)}),A_{\mid H})\stackrel{\cdot\otimes res_H^G(x)}{\rightarrow}\KK^H(C(H^{(0)}), B_{\mid H})\]
		is an isomorphism for all compact open subgroupoids $H\subseteq G$. Then the Kasparov-product with $x$ induces an isomorphism
		$$ \cdot\otimes x:\K_*^{\mathrm{top}}(G;A)\rightarrow \K_*^{\mathrm{top}}(G;B).$$
	\end{satz}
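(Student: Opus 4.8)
The plan is to unravel the definition of topological $\K$-theory as a filtered colimit and to reduce the assertion, one cell at a time, to the hypothesis on compact open subgroupoids. Recall that $\K_*^{\mathrm{top}}(G;A)=\varinjlim_Y \KK^G_*(C_0(Y),A)$, where $Y$ ranges over the $G$-compact $G$-invariant subsets of a universal proper $G$-space $\underline{E}G$, directed by inclusion. The Kasparov product with $x$ is natural in the first variable and hence commutes with the structure maps of this system, so it descends to the asserted map $\cdot\otimes x$. Since the colimit is functorial, a level-wise isomorphism of diagrams induces an isomorphism of colimits, so it suffices to show that $\cdot\otimes x:\KK^G_*(C_0(Y),A)\rightarrow\KK^G_*(C_0(Y),B)$ is an isomorphism for every $G$-compact proper $G$-space $Y$ in the system.

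First I would treat the elementary building blocks. Because $G$ is ample, the totally disconnected unit space and the basis of compact open bisections let one choose compact open slices, so that every such $Y$ admits a finite $G$-invariant filtration $\emptyset=Y_0\subseteq Y_1\subseteq\dots\subseteq Y_n=Y$ by closed invariant subsets whose successive open differences $Y_k\setminus Y_{k-1}$ are $G$-equivariantly homeomorphic to induced spaces $G\times_H Z$, with $H\subseteq G$ compact open and $Z$ a compact open $H$-space (in the simplest case $Z=H^{(0)}$). On such an induced space the groupoid Shapiro/induction isomorphism furnishes a natural identification $\KK^G_*(C_0(G\times_H Z),A)\cong\KK^H_*(C_0(Z),A_{\mid H})$, and---this is the key compatibility---under this identification the product with $x$ corresponds to the product with $res_H^G(x)$, since restriction to $H$ is a functor on equivariant $\KK$-theory that preserves Kasparov products and the adjunction is natural. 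Taking $Z=H^{(0)}$ recovers exactly the groups $\KK^H(C(H^{(0)}),A_{\mid H})$ of the hypothesis, so $\cdot\otimes x$ is an isomorphism on each elementary piece by assumption; the case of general compact open $Z$ follows because, over the totally disconnected orbit space, $C_0(Z)$ is assembled from clopen copies of $C(H^{(0)})$.

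With the elementary case in hand, I would run an induction on the length $n$ of the filtration. Each inclusion $Y_{k-1}\hookrightarrow Y_k$ gives a short exact sequence of $G$-$C^*$-algebras $0\to C_0(Y_k\setminus Y_{k-1})\to C_0(Y_k)\to C_0(Y_{k-1})\to 0$, and hence a six-term exact sequence in $\KK^G_*(-,A)$. The product with $x$ maps the sequence for $A$ to that for $B$ and commutes with every boundary map by naturality of the Kasparov product. Since $\cdot\otimes x$ is an isomorphism on the ideal term (an elementary piece) and on the quotient term ($Y_{k-1}$ has shorter filtration, by induction), the five lemma forces it to be an isomorphism on $\KK^G_*(C_0(Y_k),A)$ as well, completing the induction and therefore the colimit argument.

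The main obstacle is the structural input of the second paragraph: producing, for an arbitrary $G$-compact proper $G$-space of an ample groupoid, a finite filtration whose subquotients are induced from compact open subgroupoids, together with the induction isomorphism and its compatibility with restricting $x$. This is precisely where ampleness is indispensable---the compact open bisections let one slice a proper action into pieces carrying open, compact stabiliser-type data, whereas for a general étale groupoid no such decomposition need exist. Verifying the naturality of the Shapiro isomorphism with respect to Kasparov products, so that the $G$-level map genuinely becomes the $H$-level map $\cdot\otimes res_H^G(x)$, is the remaining technical point and should be dispatched by tracking the Kasparov cycles explicitly through the induction construction.
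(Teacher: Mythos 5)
First, a remark on what is being compared: the paper under review does not prove this theorem at all---it quotes it from \cite{1806.00391}---so your proposal must be measured against the proof given there, which follows the Chabert--Echterhoff--Oyono-Oyono going-down strategy. Your global architecture is the same as that proof: express $\K_*^{\mathrm{top}}(G;A)$ as a filtered colimit, reduce via six-term exact sequences and the five lemma to pieces induced from compact open subgroupoids, and transport the hypothesis through an induction (Shapiro-type) isomorphism compatible with $res_H^G(x)$. However, your key structural step contains a genuine gap.

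You assert that \emph{every} $G$-compact proper $G$-space $Y$ in the colimit admits a finite invariant filtration whose successive differences are induced spaces $G\times_H Z$ with $H$ compact open and $Z$ a \emph{compact open} (hence totally disconnected) $H$-space. Ampleness constrains $G$, not the topology of $Y$ transverse to the orbits: a $G$-compact invariant subset of the universal proper $G$-space can, transverse to the orbits, be an arbitrary compact metrizable space (a Cantor set inside a simplex, or an infinite-dimensional compactum), and such a $Y$ admits no finite filtration by cells of any kind. So the ``level-wise isomorphism of diagrams'' premise is unjustified as stated. The actual proof does not filter arbitrary $Y$; it proves a \emph{cofinality} statement: using properness, second countability and a partition-of-unity/nerve argument, every $G$-compact proper $G$-space maps $G$-equivariantly to a $G$-compact, finite-dimensional $G$-simplicial complex whose cells are induced from compact open subgroupoids (arranged, after subdivision, so that stabilizing subgroupoids act trivially on the simplex directions); these complexes form a cofinal subsystem, the colimit is computed over them, and only there does your skeleton-by-skeleton five-lemma induction run. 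Relatedly, the cells are of the form $G\times_H(H^{(0)}\times\sigma)$ with $\sigma$ an open simplex, which is not totally disconnected once $\dim\sigma\geq 1$; the reduction to the hypothesis therefore goes through homotopy invariance/Bott periodicity in the first variable of $\KK^H$, stripping off the $C_0(\sigma)$ factor to land exactly on $\KK^H(C(H^{(0)}),\cdot\,)$. Your alternative---assembling $C_0(Z)$ from clopen copies of $C(H^{(0)})$---is unavailable for these cells, and even for totally disconnected $Z$ it would require a continuity property of $\KK^H(\cdot,A)$ under inductive limits in the first variable that fails in general. Finally, two items you flag as routine are genuine lemmas in the cited proof and cannot be waved through: the groupoid induction isomorphism together with its compatibility with $res_H^G$ and Kasparov products, and the equivariant semi-splitness of the extensions $0\to C_0(Y_k\setminus Y_{k-1})\to C_0(Y_k)\to C_0(Y_{k-1})\to 0$, without which the six-term sequences you invoke in $\KK^G$ need not exist.
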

	
	From now on fix a homotopy of twists $\Sigma$ over an étale Hausdorff groupoid $G$.
	Consider the canonical Hilbert $C_0(G^{(0)}\times [0,1])$-module $E$, defined as the completion of $C_c(G\times[0,1];\Sigma)$ with respect to the inner product defined in equation \ref{Eq:TwistedInnerProduct}.
	Now by Proposition \ref{Prop:PackerRaeburn} and the discussion thereafter we obtain an action $\alpha$ of $G\times[0,1]$ on $K(E)$. Observe, that there is a canonical action of $G$ on $G^{(0)}\times[0,1]$ given by $g\cdot (d(g),t)=(r(g),t)$, such that $G\times[0,1]\cong G\ltimes(G^{(0)}\times [0,1])$. Taking this point of view we can use the pushforward construction (see for example \cite[Proposition~3.10]{1806.00391}) to obtain an action $\beta$ of $G$ on $K(E)$. One has the following:
	\begin{prop}\cite[Theorem~3.8]{1703.05190}
	The canonical map
	$\Phi:\Gamma_c(G\times[0,1],r^*\mathcal{K})\rightarrow \Gamma_c(G,r^*\mathcal{K})$ given by
	$\Phi(f)(g)(t)=f(g,t)$
	is a $\ast$-homo\-morphism and extends to a $\ast$-isomorphism $$\Phi:K(E)\rtimes_{\alpha,r} (G\times[0,1])\rightarrow K(E)\rtimes_{\beta,r} G.$$
	\end{prop}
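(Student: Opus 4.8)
The plan is to establish the isomorphism in three stages: unpack the pushforward action $\beta$, verify that $\Phi$ is a $\ast$-isomorphism of the dense convolution algebras, and show that $\Phi$ is isometric for the reduced norms by decomposing the regular representations fibrewise over $[0,1]$. First I would make $\beta$ explicit. Since $G$ acts trivially on the $[0,1]$-coordinate of $G^{(0)}\times[0,1]$, the pushforward of $K(E)$ along the projection $\pi:G^{(0)}\times[0,1]\rightarrow G^{(0)}$ has fibre over $u\in G^{(0)}$ the $C([0,1])$-algebra $B_u:=\Gamma([0,1],\mathcal{K}_{\mid\{u\}\times[0,1]})$ of continuous sections of the restricted bundle, whose fibre at $t$ is $K(E_{(u,t)})$. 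By the definition of the pushforward in \cite[Proposition~3.10]{1806.00391}, $\beta$ acts fibrewise by $(\beta_g\xi)(t)=\alpha_{(g,t)}(\xi(t))$. Under this identification the bundle appearing in the target $\Gamma_c(G,r^*\mathcal{K})$ is the pullback of the pushforward bundle, and $\Phi(f)(g)$ is the section $t\mapsto f(g,t)$.

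Next I would check that $\Phi$ is a $\ast$-homomorphism on $\Gamma_c$. The key point is that $(g,s)$ and $(h,t)$ are composable in $G\times[0,1]$ only when $s=t$, so the convolution at $(g,t)$ reads $(f_1\ast f_2)(g,t)=\sum_{h\in G^{r(g)}}f_1(h,t)\alpha_{(h,t)}(f_2(h^{-1}g,t))$. Comparing this with the $\beta$-convolution $(\Phi(f_1)\ast\Phi(f_2))(g)=\sum_{h\in G^{r(g)}}\Phi(f_1)(h)\beta_h(\Phi(f_2)(h^{-1}g))$, evaluating at $t$ and inserting $(\beta_h\xi)(t)=\alpha_{(h,t)}(\xi(t))$, shows that both sides agree; the involution is handled identically. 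Moreover $\Phi$ is a bijection of $\Gamma_c(G\times[0,1],r^*\mathcal{K})$ onto $\Gamma_c(G,r^*\mathcal{K})$: given a compactly supported continuous section $g\mapsto h(g)$ of the pushforward bundle, the map $(g,t)\mapsto h(g)(t)$ is jointly continuous with support contained in $\mathrm{supp}(h)\times[0,1]$, so it lies in the source and is the unique preimage.

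To extend $\Phi$ to the completions it then suffices to show it is isometric for the reduced norms, since it already has dense range. For $u\in G^{(0)}$ the regular representation $\pi_u^\beta$ acts on the Hilbert $B_u$-module $\ell^2(G^u,B_u)$; because $B_u$ is a $C([0,1])$-algebra with fibre $K(E_{(u,t)})$ at $t$ and $[0,1]$ is compact, the operator norm of $\pi_u^\beta(\Phi(f))$ equals the supremum over $t$ of the norms of its fibres. Using the identification $(G\times[0,1])^{(u,t)}\cong G^u$, which sends $\ell^2(G^u,B_u)\otimes_{\mathrm{ev}_t}K(E_{(u,t)})$ to $\ell^2(G^u,K(E_{(u,t)}))$, I would verify that the fibre at $t$ of $\pi_u^\beta(\Phi(f))$ is exactly $\pi_{(u,t)}^\alpha(f)$. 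Taking suprema over $u$ and $t$ then gives $\norm{\Phi(f)}_r=\sup_u\sup_t\norm{\pi_{(u,t)}^\alpha(f)}=\sup_{(u,t)}\norm{\pi_{(u,t)}^\alpha(f)}=\norm{f}_r$, so $\Phi$ extends to a $\ast$-isomorphism.

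The hard part will be making the fibrewise decomposition of $\pi_u^\beta$ rigorous: one must justify that adjointable operators on the Hilbert module $\ell^2(G^u,B_u)$ over the $C([0,1])$-algebra $B_u$ have norm equal to the supremum of their fibre norms, and that evaluation at $t$ genuinely intertwines $\pi_u^\beta$ with $\pi_{(u,t)}^\alpha$. Some care is also needed at the level of the pushforward bundle to confirm that its continuous compactly supported sections are precisely the reparametrisations $t\mapsto f(\cdot,t)$ of sections over $G\times[0,1]$, which is what makes $\Phi$ surjective on the dense subalgebras; here the compactness of $[0,1]$ is essential.
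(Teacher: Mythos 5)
Your argument is correct, but there is nothing in the paper to compare it against: the paper does not prove this proposition at all, it imports it wholesale from LaLonde \cite[Theorem~3.8]{1703.05190}. Your proof is essentially the standard argument, and it follows the same strategy as the cited reference, specialised to the trivial bundle $G^{(0)}\times[0,1]$: make the pushforward action $\beta$ explicit, check that $\Phi$ matches the convolution formulas on the dense subalgebras (using that composability in $G\times[0,1]$ forces equal $[0,1]$-coordinates), and obtain isometry for the reduced norms by localising the regular representation $\pi_u^\beta$ at each $t\in[0,1]$ and identifying the localisation with $\pi_{(u,t)}^\alpha$. The two points you flag as needing care are genuine but standard: the norm of an adjointable operator $T$ on a Hilbert module over a $C_0(X)$-algebra is $\sup_x\norm{T_x}$ (one inequality from $\norm{T\xi}^2=\sup_x\norm{T_x\xi_x}^2$, the other because $T\mapsto T_x$ is a unital $\ast$-homomorphism of adjointable operator algebras, hence contractive), and the identification of $\Gamma_c(G,r^*\mathcal{K}_\beta)$ with the functions $(g,t)\mapsto h(g)(t)$ follows by locally uniformly approximating sections by finite sums of elementary tensors $\varphi\otimes b$ with $\varphi\in C_c(G)$, $b\in K(E)$, for which the correspondence is evident; compactness of $[0,1]$ enters exactly where you say it does (compact supports are preserved under the inverse map, and the pushforward fibres are $\Gamma(\{u\}\times[0,1],\mathcal{K})$ rather than some $\Gamma_0$-space). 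The only trade-off worth recording is generality: LaLonde's theorem applies to arbitrary $G$-spaces fibred over $G^{(0)}$ for groupoids with Haar systems, while your argument leans on \'etaleness and the product structure; since that is all the paper needs, your self-contained proof is an adequate substitute for the citation.
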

	On the other hand for each $t\in[0,1]$ we can apply (the proof of) Proposition \ref{Prop:PackerRaeburn} to the twist $\Sigma_t$ over $G$, in order to obtain a Hilbert $C_0(G^{(0)})$-module $E_t$ and an action $\alpha_t$ of $G$ on $K(E_t)$. Let us record the following easy observations concerning the relationship between $E$ and $E_t$:
	\begin{lemma}
		The restriction map $C_c(G\times[0,1];\Sigma)\rightarrow C_c(G;\Sigma_t)$, $f\mapsto f_{\mid \Sigma_t}$ extends to a surjective bounded linear map $p_t:E\rightarrow E_t$.
	\end{lemma}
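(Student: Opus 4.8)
The plan is to recognise $p_t$ as the localisation of the Hilbert module $E$ along the restriction $\ast$-homomorphism $\mathrm{ev}_t\colon C_0(G^{(0)}\times[0,1])\to C_0(G^{(0)})$, $\mathrm{ev}_t(\phi)(u)=\phi(u,t)$, and then to read off surjectivity from the fact that such a localisation is (up to isometry) a quotient map. The only genuinely non-formal ingredient is the surjectivity of restriction on the level of compactly supported functions, which I would obtain by mimicking the averaging argument in the proof of Lemma~\ref{Lem:ContinuousTwist}; the rest is the standard formalism.

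First I would check well-definedness and compute the effect of $p_t$ on inner products. Since $\Sigma_t=j^{-1}(G^{(0)}\times\{t\})$ is closed in $\Sigma$ and the $\TT$-action preserves it, restricting an element of $C_c(G\times[0,1];\Sigma)$ to $\Sigma_t$ yields a compactly supported continuous $\TT$-equivariant function, i.e. an element of $C_c(G;\Sigma_t)$. The key point is that the fibre of the inner product on $E$ over a point $(u,t)$ only sees the slice $\Sigma_t$: for $f,g\in C_c(G\times[0,1];\Sigma)$ one has
$$\lk f,g\rk_E(u,t)=\sum_{j(\sigma)\in (G\times[0,1])_{(u,t)}}\overline{f(\sigma)}g(\sigma)=\sum_{j(\sigma)\in G_u,\ \sigma\in\Sigma_t}\overline{f(\sigma)}g(\sigma)=\lk p_t(f),p_t(g)\rk_{E_t}(u),$$
so that $\lk p_t(f),p_t(g)\rk_{E_t}=\mathrm{ev}_t(\lk f,g\rk_E)$. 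As $\mathrm{ev}_t$ is a $\ast$-homomorphism, and hence contractive, this gives $\norm{p_t(f)}\le\norm{f}$, so $p_t$ extends to a contraction $E\to E_t$, and the identity $\lk p_t(x),p_t(y)\rk_{E_t}=\mathrm{ev}_t(\lk x,y\rk_E)$ persists for all $x,y\in E$ by continuity.

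Next I would show that restriction maps \emph{onto} $C_c(G;\Sigma_t)$, giving $p_t$ dense range. Given $h\in C_c(G;\Sigma_t)$, I would first extend it to some $f_0\in C_c(\Sigma)$ by Tietze's theorem (cutting down the support with a bump function), and then average over the circle, setting $f(\sigma):=\int_{\TT}\overline{z}f_0(z\sigma)\,dz$, exactly as in the proof of Lemma~\ref{Lem:ContinuousTwist}. Invariance of Haar measure shows $f\in C_c(G\times[0,1];\Sigma)$, and since $h$ is already $\TT$-equivariant and the $\TT$-action preserves $\Sigma_t$, one computes $f_{\mid\Sigma_t}=h$, so $p_t(f)=h$. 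Hence the range of $p_t$ contains the dense subspace $C_c(G;\Sigma_t)\subseteq E_t$.

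Finally I would upgrade dense range to genuine surjectivity using the inner-product identity. With $J=\ker(\mathrm{ev}_t)$ the identity shows $E\cdot J\subseteq\ker p_t$, so $p_t$ descends to a map $E/\overline{E\cdot J}\to E_t$; moreover this induced map is isometric, because the inner product $E/\overline{E\cdot J}$ carries as the localised Hilbert $C_0(G^{(0)})$-module is precisely $\mathrm{ev}_t(\lk\cdot,\cdot\rk_E)$. An isometry out of the complete space $E/\overline{E\cdot J}$ has closed range, and since that range is dense by the previous step it is all of $E_t$; thus $p_t$ is surjective. I expect the main obstacle to be the surjectivity of restriction on compactly supported functions (the Tietze-plus-averaging step), as everything else is the standard localisation formalism for Hilbert modules together with the fibrewise inner-product computation above.
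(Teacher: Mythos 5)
Your proposal is correct and follows essentially the same route as the paper: boundedness via compatibility of inner products with $\mathrm{ev}_t$, surjectivity at the level of compactly supported functions by the Tietze-plus-averaging argument of Lemma \ref{Lem:ContinuousTwist}, and then the crucial upgrade from dense range to surjectivity by factoring $p_t$ through an isometric map out of the localised module (your $E/\overline{E\cdot J}$ is exactly the paper's pullback module $i_t^*E$, and the surjectivity of $E\rightarrow i_t^*E$ you invoke is the same standard fact the paper uses). The only difference is presentational: you phrase the pullback as a quotient, which makes that last surjectivity immediate.
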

	\begin{proof}
	It is routine to check, that the restriction map is bounded and linear. Using an argument similar to the proof of Lemma \ref{Lem:ContinuousTwist} one sees that the restriction map $C_c(G\times[0,1];\Sigma)\rightarrow C_c(G;\Sigma_t)$ is surjective. This is not quite enough to conclude that $p_t$ is surjective. However, if $i_t:G^{(0)}\rightarrow G^{(0)}\times[0,1]$ denotes the inclusion at $t\in[0,1]$, then $p_t$ factors through an isometric linear map $i_t^*E\rightarrow E_t$. Since this map is isometric, it is enough to know that the dense subset $C_c(G;\Sigma_t)$ is contained in the image to conclude surjectivity. Using, that the canonical map $E\cong E\otimes_{C_0(G^{(0)}\times[0,1])}C_0(G^{(0)}\times[0,1])\rightarrow i_t^*E$ is surjective, the result follows.
	\end{proof}
	We can use an argument similar to the proof of Lemma \ref{Lem:ContinuousTwist} again, to show that for $u\in G^{(0)}$ we can canonically identify the Hilbert spaces $E_{(u,t)}$ and $(E_t)_u$ and hence also $K(E)_{(u,t)}$ with $K(E_t)_u$.
	Let $X$ and $X_t$ be the equivalence bimodules obtained from applying Proposition \ref{Prop:PackerRaeburn} to the twists $\Sigma$ and $\Sigma_t$, respectively. We have the following:
	\begin{prop}\label{Prop:Bimodules}
		The canonical restriction map $\Gamma_c(G\times[0,1],d^*\mathcal{E})\rightarrow \Gamma_c(G,d^*\mathcal{E}_t)$, $\xi\mapsto \xi_{\mid G\times\lbrace t\rbrace}$ extends to a bounded linear map $\Psi_t:X\rightarrow X_t$ and factors through an isomorphism
		$$\Theta_t:q_t^*(X)\rightarrow X_t$$
		of Hilbert $C_r^*(G;\Sigma_t)$-modules.
	\end{prop}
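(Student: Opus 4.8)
The plan is to show that the restriction map intertwines the two bimodule structures up to the homomorphism $q_t$, and then to let the universal property of the balanced tensor product $q_t^*(X)=X\otimes_{q_t}C_r^*(G;\Sigma_t)$ do the rest. Concretely, I would define $\Psi_t$ on the dense submodules first, verify the two compatibility identities there, and finally pass to completions.

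So write $X_0=\Gamma_c(G\times[0,1],d^*\mathcal{E})$ and $(X_t)_0=\Gamma_c(G,d^*\mathcal{E}_t)$, and define $\Psi_t$ on $X_0$ by $\Psi_t(\xi)(g)=\xi(g,t)$, using the canonical identification $\mathcal{E}_{(d(g),t)}\cong(\mathcal{E}_t)_{d(g)}$ recorded just before the statement. The decisive structural point is that composability in $G\times[0,1]$ forces the $[0,1]$-coordinate to stay constant: if $\tau\in\Sigma$ lies over $G\times\lbrace t\rbrace$, then every $\sigma$ with $j(\sigma)\in(G\times[0,1])^{d(\tau)}$ again lies over $G\times\lbrace t\rbrace$, and under the identifications $E_{(u,t)}\cong(E_t)_u$ and $\rho_{\mid\Sigma_t}\cong\rho_t$ the representation $\rho$ restricts to the one attached to $\Sigma_t$. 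Feeding this into the explicit formulas for the right action and the $C_c(G;\Sigma)$-valued inner product recalled above, one checks term by term that
$$\Psi_t(\xi\cdot f)=\Psi_t(\xi)\cdot q_t(f)\qquad\text{and}\qquad \lk\Psi_t(\xi),\Psi_t(\eta)\rk_{X_t}=q_t\bigl(\lk\xi,\eta\rk_X\bigr)$$
for all $\xi,\eta\in X_0$ and $f\in C_c(G\times[0,1];\Sigma)$; the finite sums defining the two sides agree once everything is transported to the $t$-slice.

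With these two identities the rest is formal. Since $q_t$ is a $\ast$-homomorphism it is norm-decreasing, so the second identity yields
$$\norm{\Psi_t(\xi)}_{X_t}^2=\norm{q_t(\lk\xi,\xi\rk_X)}\leq\norm{\lk\xi,\xi\rk_X}=\norm{\xi}_X^2,$$
whence $\Psi_t$ is a contraction and extends to $\Psi_t\colon X\rightarrow X_t$. The first identity shows that $\Psi_t$ vanishes on $\xi\cdot f$ whenever $f\in\ker q_t$, hence on all of $\overline{X\cdot\ker q_t}$, so it descends to a well-defined map $\Theta_t\colon q_t^*(X)\rightarrow X_t$ determined by $\Theta_t(\xi\otimes b)=\Psi_t(\xi)\cdot b$. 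Using the formula for the inner product on the balanced tensor product together with the second identity, one computes
$$\lk\Theta_t(\xi\otimes b),\Theta_t(\eta\otimes c)\rk_{X_t}=b^*\,q_t(\lk\xi,\eta\rk_X)\,c=\lk\xi\otimes b,\eta\otimes c\rk_{q_t^*(X)},$$
so $\Theta_t$ preserves the $C_r^*(G;\Sigma_t)$-valued inner products; in particular it is right-linear and isometric.

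It remains to establish surjectivity. By an extension-and-cutoff argument of the same type as in the proof of Lemma \ref{Lem:ContinuousTwist} — extend a compactly supported section over $G$ to one over $G\times[0,1]$ and multiply by a bump function in the $[0,1]$-direction to restore compact support — the restriction map $X_0\rightarrow(X_t)_0$ is surjective, so the range of $\Theta_t$ contains the dense submodule $(X_t)_0$. An inner-product-preserving map of Hilbert modules with dense range is a unitary, so $\Theta_t$ is the asserted isomorphism. The only genuinely laborious step is the term-by-term verification of the two displayed identities on $X_0$; everything afterwards is the routine passage from dense submodules to their completions.
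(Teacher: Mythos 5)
Your skeleton is the same as the paper's: define $\Psi_t$ on the dense submodule via restriction, verify $\lk\Psi_t(\xi),\Psi_t(\eta)\rk=q_t(\lk\xi,\eta\rk)$, deduce boundedness, pass to the balanced tensor product, check that $\Theta_t$ preserves inner products (hence is isometric), and reduce surjectivity to density of the range. All of that part of your argument is correct; indeed the paper does not even record the module-compatibility identity $\Psi_t(\xi f)=\Psi_t(\xi)q_t(f)$ separately, since preservation of the semi-inner product on the algebraic tensor product already forces the descent to $q_t^*(X)$.

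The genuine gap is in your surjectivity step. You assert that the restriction map $X_0\rightarrow (X_t)_0$ is surjective ``by an extension-and-cutoff argument of the same type as in the proof of Lemma \ref{Lem:ContinuousTwist}.'' That lemma's mechanism is Tietze extension of a \emph{scalar-valued} function on the locally compact space $\Sigma$, followed by averaging over the $\TT$-action. It does not transfer to the present situation: an element of $(X_t)_0=\Gamma_c(G,d^*\mathcal{E}_t)$ is a section of a bundle of (generally infinite-dimensional) Hilbert spaces, whose values $\xi(g)\in (E_t)_{d(g)}$ are arbitrary elements of the completed fibres and are not scalar functions on any ambient space; no off-the-shelf Tietze theorem applies. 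Extending continuous sections of a Banach bundle from a closed subset is true, but it is a genuinely different and less elementary fact (it requires the theory of upper semicontinuous Banach bundles, e.g.\ that a fibrewise-dense closed $C_0$-submodule of the section module is everything and that restriction to a closed set is a quotient map), none of which is developed in the paper. Note that the paper is alert to precisely this kind of trap: in the lemma immediately preceding the proposition it points out that surjectivity on the dense subspace ``is not quite enough'' and routes the argument through $i_t^*E$. The paper's proof of the proposition sidesteps the bundle-extension problem altogether: it shows that elementary tensors $\varphi\otimes e$ (with $\varphi\in C_c(G)$, $e\in E_t$) lie \emph{exactly} in the image of $\Psi_t$, using the surjectivity of $p_t:E\rightarrow E_t$ from that preceding lemma, and then approximates an arbitrary $\xi\in\Gamma_c(G,d^*\mathcal{E}_t)$ by finite sums of such tensors in the inductive limit topology, invoking Lemma \ref{Lemma:PR-Inductive Limit} to upgrade inductive-limit convergence to norm convergence. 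Your proof can be repaired either by substituting this argument or by importing the Banach-bundle extension theorem; in the latter case the cutoff must be taken in the $G$-direction rather than ``in the $[0,1]$-direction'' as you wrote, since $[0,1]$ is already compact and a bump function in $t$ restores nothing.
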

	\begin{proof}
		From the definition of the respective inner products it is obvious that $\lk \Psi_t(\xi),\Psi_t(\eta)\rk=q_t(\lk \xi, \eta\rk)$ for all $\xi,\eta\in \Gamma_c(G\times[0,1],d^*\mathcal{E})$. It follows that $\Psi_t$ is bounded and hence extends to all of $X$. Define $\Theta_t:q_t^*X=X\otimes_{q_t}C_r^*(G;\Sigma_t)\rightarrow X_t$ on elementary tensors by $\Theta_t(\xi\otimes a)=\Psi_t(\xi)a$. Then $\Theta_t$ extends to an isometric map on all of $q_t^*X$, since
		for $\xi,\eta\in X$ and $a,b\in C_r^*(G;\Sigma_t)$ we can compute
		\begin{align*}
			\lk \xi\otimes a,\eta\otimes b\rk & = (q_t(\lk \eta,\xi\rk)a)^*b\\
			& = (\lk \Psi_t(\eta),\Psi_t(\xi)\rk a)^*b\\
			& = \lk \Psi_t(\xi)a, \Psi_t(\eta) b\rk\\
			& = \lk \Theta(\xi\otimes a),\Theta(\eta\otimes b)\rk.
		\end{align*}
		Finally, to see that $\Theta_t$ is surjective, it is enough to show that it has dense image. First, let $\xi\in \Gamma_c(G,d^*\mathcal{E}_t)$ be of the form $\xi=\varphi\otimes e$ with $\varphi\in C_c(G)$ and $e\in E_t$, i.e. $\xi(g)=\varphi(g)e(d(g))$. Since $p_t:E\rightarrow E_t$ is surjective, we can find an element $e'\in E$ such that $p_t(e')=e$. Also, pick any map $\varphi'\in C_c(G\times[0,1])$, such that $\varphi'(g,t)=\varphi(g)$. Then $\varphi'\otimes e'\in X$ such that $\Psi_t(\varphi'\otimes e')=\xi$. Now if $\xi\in \Gamma_c(G,d^*\mathcal{E}_t)$ is arbitrary we can approximate it in the inductive limit topology by finite sums of elements of the form $\varphi\otimes e$ as above.
		An application of Lemma \ref{Lemma:PR-Inductive Limit} completes the proof.
	\end{proof}
	Let $x\in \KK(K(E)\rtimes_r G,C_r^*(G\times[0,1];\Sigma))$ and $x_t\in \KK(K(E_t)\rtimes_r G,C_r^*(G;\Sigma_t))$ be the canonical KK-equivalences associated to the equivalence bimodules $X$ and $X_t$ respectively. 
	\begin{lemma}\label{Lem:Evaluation1}
	For each $t\in[0,1]$ restriction of functions induces a $G$-equi\-variant $\ast$-homo\-morphism $\Phi_t:K(E)\rightarrow K(E_t)$, such that the following diagram commutes:\begin{center}
			\begin{tikzpicture}[description/.style={fill=white,inner sep=2pt}]
			\matrix (m) [matrix of math nodes, row sep=3em,
			column sep=2.5em, text height=1.5ex, text depth=0.25ex]
			{ \K_*(K(E)\rtimes_r G) & \K_*(K(E_t)\rtimes_{\alpha_t,r}G) \\
				\K_*(C_r^*(G\times[0,1];\Sigma)) & \K_*(C_r^*(G;\Sigma_t)) \\
			};
			\path[->,font=\scriptsize]
			(m-1-1) edge node[auto] {$ (\Phi_t\rtimes G)_* $} (m-1-2)
			
			(m-2-1) edge node[auto] {$ (q_t)_* $} (m-2-2)
			
			(m-1-1) edge node[auto] {$ \cdot\otimes x $} (m-2-1)
			(m-1-2) edge node[auto] {$ \cdot\otimes x_t $} (m-2-2)
			
			;
			\end{tikzpicture}
		\end{center}	
	\end{lemma}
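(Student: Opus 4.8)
The plan is to first pin down $\Phi_t$ and its equivariance, then reduce the commuting diagram to a single identity of Kasparov products, and finally verify that the isomorphism already produced in Proposition \ref{Prop:Bimodules} respects the left module structures. Viewing $K(E)$ as a $C_0(G^{(0)}\times[0,1])$-algebra, I would take $\Phi_t$ to be fibre restriction at $t$: the quotient map onto $K(E)\vert_{G^{(0)}\times\{t\}}$ followed by the canonical identification $K(E)_{(u,t)}\cong K(E_t)_u$ recorded just before Proposition \ref{Prop:Bimodules}. Concretely, $\Phi_t$ is the unique $\ast$-homomorphism with $\Phi_t(T)p_t(e)=p_t(Te)$ for all $T\in K(E)$, $e\in E$, where $p_t\colon E\to E_t$ is the surjection from the preceding lemma; since a compact operator is $C_0(G^{(0)}\times[0,1])$-linear it preserves $\ker p_t$, so this is well defined. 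Equipping $K(E)$ with the pushforward $G$-action $\beta$, the identity $(\beta_g T)(t)=\alpha_{(g,t)}(T(t))$ combined with the identification $\alpha_{(g,t)}\leftrightarrow(\alpha_t)_g$ gives $\Phi_t\circ\beta_g=(\alpha_t)_g\circ\Phi_t$, so $\Phi_t$ is $G$-equivariant and descends to $\Phi_t\rtimes G\colon K(E)\rtimes_r G\to K(E_t)\rtimes_{\alpha_t,r}G$.

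Writing $[q_t]$ and $[\Phi_t\rtimes G]$ for the $\KK$-classes of the respective $\ast$-homomorphisms, commutativity of the diagram is equivalent, after cancelling the arbitrary class $z\in\K_*(K(E)\rtimes_r G)$ by associativity of the Kasparov product, to the identity
$$x\otimes_{C_r^*(G\times[0,1];\Sigma)}[q_t]=[\Phi_t\rtimes G]\otimes_{K(E_t)\rtimes_r G}x_t$$
in $\KK(K(E)\rtimes_r G,C_r^*(G;\Sigma_t))$. The left-hand product is represented by the correspondence $q_t^*(X)=X\otimes_{q_t}C_r^*(G;\Sigma_t)$ carrying the left action inherited from $X$, and the right-hand product by $X_t$ with the left $K(E)\rtimes_r G$-action factored through $\Phi_t\rtimes G$. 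Proposition \ref{Prop:Bimodules} already supplies an isomorphism $\Theta_t\colon q_t^*(X)\to X_t$ of right Hilbert $C_r^*(G;\Sigma_t)$-modules, so the only remaining point is that $\Theta_t$ also intertwines the two left actions.

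Since $\Theta_t(\xi\otimes a)=\Psi_t(\xi)a$ and the left action on $q_t^*(X)$ is $b\cdot(\xi\otimes a)=(b\xi)\otimes a$, by boundedness and density it suffices to prove $\Psi_t(b\xi)=(\Phi_t\rtimes G)(b)\,\Psi_t(\xi)$ for $b\in A_0$ and $\xi\in X_0$. Using the explicit formula for the left action recalled before Lemma \ref{Lemma:PR-Inductive Limit} for the groupoid $G\times[0,1]$ and restricting to $s=t$ gives
$$\Psi_t(b\xi)(g)=\sum_{h\in G^{r(g)}}\alpha_{(g^{-1}h,t)}\bigl(b(h^{-1},t)\bigr)\,\xi(h^{-1}g,t),$$
whereas expanding the right-hand side with $(\Phi_t\rtimes G)(b)(g)=b(g,t)$ yields the same sum with $\alpha_{(g^{-1}h,t)}$ replaced by $(\alpha_t)_{g^{-1}h}$. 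These coincide termwise precisely by the identification $\alpha_{(g',t)}\leftrightarrow(\alpha_t)_{g'}$ underlying the $G$-equivariance of $\Phi_t$, and the equality then extends to all of $K(E)\rtimes_r G$ and $q_t^*(X)$ by continuity. I expect the main obstacle to be bookkeeping rather than conceptual: one must keep the pushforward isomorphism, the fibre identifications $K(E)_{(u,t)}\cong K(E_t)_u$, and the two left-action formulas aligned so that the sums match, and be careful that $\Phi_t$ is taken with respect to the pushforward $G$-structure $\beta$ (not the $G\times[0,1]$-structure $\alpha$) so that $\Phi_t\rtimes G$ is the correct map on reduced crossed products.
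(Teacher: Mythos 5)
Your proposal is correct and follows essentially the same route as the paper: define $\Phi_t$ by fibrewise evaluation at $t$, reduce commutativity of the diagram to the single identity $x\otimes[q_t]=[\Phi_t\rtimes G]\otimes x_t$ in $\KK(K(E)\rtimes_r G,C_r^*(G;\Sigma_t))$ using that all classes involved are represented by Kasparov triples with zero operator, and conclude via the isomorphism $\Theta_t$ of Proposition \ref{Prop:Bimodules} once it is seen to intertwine the two left actions. The only difference is one of detail: you carry out the termwise computation showing that $\Theta_t$ intertwines the left $K(E)\rtimes_r G$-actions (and flag the $\alpha$ versus $\beta$ bookkeeping), a step the paper records simply as an observation.
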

	
	\begin{proof}
	Recall that $K(E)$ is a $C_0(G^{(0)}\times[0,1])$-algebra. Let $\mathcal{K}(E)$ denote the associated bundle. Similarly $K(E_t)$ is a $C_0(G^{(0)})$-algebra with associate bundle $\mathcal{K}(E_t)$. For $f\in \Gamma_0(G^{(0)}\times[0,1],\mathcal{K}(E))=K(E)$ and $u\in G^{(0)}$ define
	$$\Phi_t(f)(u):=f(u,t).$$
	Then $\Phi_t(f)\in \Gamma_0(G^{(0)},\mathcal{K}(E_t))\cong K(E_t)$ and it is straightforward to verify, that $\Phi_t$ is a $G$-equivariant $\ast$-homomorphism.
	To see commutativity of the diagram, it is enough to check that $[\Phi_t\rtimes G]\otimes x_t=x\otimes [q_t]$ in $\KK(K(E)\rtimes_r G,C_r^*(G;\Sigma_t))$. Since all the elements involved can be represented by Kasparov-triples, where the operator is zero, these products are easy to describe: The element on the left handside can be represented by the triple $(X_t,\Phi_t\rtimes G,0)$, while the right handside is given by the class of $(X\otimes_{q_t}C_r^*(G;\Sigma_t),\psi\otimes 1,0)$, where $\psi$ is the left action of $K(E)\rtimes_r G$ on $X$.
	From Proposition \ref{Prop:Bimodules} we have an isomorphism of  right Hilbert $C_r^*(G;\Sigma_t)$-bimodules
	$\Theta:X\otimes_{q_t}C_r^*(G;\Sigma_t)\rightarrow X_t$ given on elementary tensors by $\xi\otimes a\mapsto \xi_{\mid G\times\lbrace t\rbrace}a$.
	Thus, to complete the proof, we observe that $\Theta$ intertwines the left actions of $K(E)\rtimes_r G$.
	\end{proof}
	
	\begin{proof}[Proof of Theorem \ref{Thm:Evalutation Induces Isomorphism on K-theory}]
	Fix $t\in[0,1]$. In light of Lemma \ref{Lem:Evaluation1} and the fact that $G$ satisfies the Baum-Connes conjecture with coefficients (and the naturality of the Baum-Connes assembly map),
	it is enough to show that $\Phi_t:K(E)\rightarrow K(E_t)$ induces an isomorphism
	$$(\Phi_t)_*:\K_*^{\mathrm{top}}(G;K(E))\rightarrow \K_*^{\mathrm{top}}(G;K(E_t)).$$
	Hence we are in the position to apply \cite[Theorem~7.10]{1806.00391} to deduce that it is enough to show that
	$$(\Phi_t\rtimes H)_*:\K_*(K(E)_{\mid H}\rtimes H)\rightarrow \K_*(E_t)_{\mid H}\rtimes H)$$
	is an isomorphism for all compact open subgroupoids $H\subseteq G$.
	Using Remark \ref{Rem:PackerRaeburnRestrict} and the same arguments as in Lemma \ref{Lem:Evaluation1} for $H$, we conclude that it is enough to prove that 
	$$(q_t)_*:\K_*(C_r^*(H\times[0,1];\Sigma))\rightarrow \K_*(C_r^*(H;\Sigma_t))$$
	is an isomorphism.
	Since $G$ is ample, $\Sigma$ is a continuous homotopy of twists by Proposition \ref{Prop:HomotopiesAreContinuous}. Clearly, the restriction of $\Sigma$ to $H$ has the same property. Consequently, we may apply Proposition \ref{Prop:CompactCase} to finish the proof.
	\end{proof}

	\section{Twisted actions of inverse semigroups}
	In this section we phrase our results in terms of twisted actions of inverse semigroups. These have been studied by several authors and we will recall the definition in the setting we want to work in for the readers convenience. In what follows we will write $E$ for the lattice of idempotents in an inverse semigroup $S$.
	For convenience, we will also assume that there is an element $0\in S$ such that $0s=s0=0$ for all $s\in S$. If a semigroup does not have a zero element, one can adjoin a zero element and extend the multiplication in the obvious way.
	\begin{defi}\label{Def:TwistedAction}
	A \textit{twisted action} $\theta$ of an inverse semigroup $S$ on a locally compact Hausdorff space $X$ is a triple $$\theta=(\lbrace D_s\rbrace_{s\in S},\lbrace \theta_s\rbrace_{s\in S},\lbrace \omega_{s,t}\rbrace_{s,t\in S}),$$ consisting of a family of open subsets $D_s$ of $X$ whose union covers $X$, a family of homeomorphisms $\theta_s:D_{s}\rightarrow D_{s^*}$, and a family of continuous maps $\omega(s,t):D_{st}\rightarrow \TT$, such that the following hold:
	\begin{enumerate}
	\item The pair $(\lbrace D_s\rbrace_{s\in S},\lbrace \theta_s\rbrace_{s\in S})$ defines an action of $S$ on $X$, i.e. $\theta_r\circ \theta_s=\theta_{rs}$;
	\item $\omega(s,t)(\theta_r^{-1}(x))\omega(r,st)(x)=\omega(r,s)(x)\omega(rs,t)(x)$, whenever $x\in D_r\cap D_{st}$; and
	\item $\omega(s,e)=1_{se}$ and $\omega(e,s)=1_{es}$ for all $s\in S$ and $e\in E$.
	\end{enumerate}
	\end{defi}

	It is is routine to check that a twisted action of $S$ on $X$ as defined above induces a twisted action of $S$ on $C_0(X)$ in the sense of  \cite{MR1620499} and \cite{MR2821242}.
	Following \cite[Theorem~7.2]{MR2821242}, for any such twisted action we can construct an étale groupoid $G$ with unit space $X$ and a twist $\Sigma$ over $G$ such that the twisted groupoid $\mathrm{C}^*$-algebra $C_r^*(G;\Sigma)$ is isomorphic to the twisted crossed product $C_0(X)\rtimes_{\theta,\omega}^r S$.
	It follows from \cite[Proposition~7.4]{MR2821242} that twisted actions in the sense above yield groupoids with continuous twists since we insist on condition $(3)$ above. One should note that Buss and Exel study a more general version of twisted actions. But since our main result applies to ample groupoids (and hence to continuous twists), we can stick to the definition given above without loosing any generality. More precisely, we have the following interpretation of Corollary \ref{Cor:TwistsOnAmpleGroupoidsAreContinuous} in the setting of inverse semigroups:
	\begin{kor} Let $S$ be an inverse semigroup and $X$ be a totally disconnected locally compact Hausdorff space.
	Then every twisted action $\theta$ of $S$ on $X$ in the sense of \cite[Definition~4.1]{MR2821242} is a twisted action in the sense of Definition \ref{Def:TwistedAction}.
	\end{kor}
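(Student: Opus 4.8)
The plan is to reduce the assertion to the automatic continuity of twists over ample groupoids established in Corollary \ref{Cor:TwistsOnAmpleGroupoidsAreContinuous}, and then to read the resulting normalisation back into the inverse semigroup picture. The first observation is that conditions (1) and (2) of Definition \ref{Def:TwistedAction} are automatic for a Buss--Exel twisted action on the space $X$: since the cocycle $\omega$ takes values in the central circle $\TT$, the inner automorphisms $\mathrm{Ad}\,\omega(r,s)$ occurring in the general composition law act trivially on $C_0(X)$, so the composition law collapses to the honest equality $\theta_r\circ\theta_s=\theta_{rs}$, while the general cocycle identity becomes exactly condition (2). Hence the only statement with content is that the strong normalisation (3) can be arranged.

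To obtain (3) I would pass to the groupoid model. By \cite[Theorem~7.2]{MR2821242} the twisted action $\theta$ gives rise to an étale groupoid of germs $G$ with $G^{(0)}=X$, together with a twist $\Sigma$ over $G$ built from $\omega$. Because $X$ is totally disconnected, $G$ is ample, so Corollary \ref{Cor:TwistsOnAmpleGroupoidsAreContinuous} shows that $\Sigma$ is continuous. Moreover, since $G^{(0)}$ is clopen in $G$, the global continuous section $s\colon G\to\Sigma$ may be modified exactly as in Remark \ref{Rem:Cocycles} so that it restricts to the identity on $G^{(0)}$; this is precisely a continuous, normalised $2$-cocycle representative for the twist.

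It then remains to run the correspondence of \cite{MR2821242} backwards. The idempotents of $S$ are carried into the unit space $G^{(0)}=X$, and \cite[Proposition~7.4]{MR2821242} matches the continuity of $\Sigma$ together with a section that is the identity over the units with the vanishing $\omega(s,e)=1_{se}$, $\omega(e,s)=1_{es}$ demanded by (3). Reading off the cocycle determined by the normalised section thus produces a twisted action with the same homeomorphisms $\{\theta_s\}$ and domains $\{D_s\}$, but a cohomologous cocycle satisfying (3); this is a twisted action in the sense of Definition \ref{Def:TwistedAction} defining the same twist, and hence the same twisted groupoid $\mathrm{C}^*$-algebra.

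The step I expect to be the main obstacle is this last translation: one must unwind the germ-groupoid description of \cite[Theorem~7.2]{MR2821242} precisely enough to see that ``continuous section equal to the identity on $G^{(0)}$'' corresponds term by term to the identities in (3), while tracking how the tautological section coming from $S$ differs from the one supplied by Corollary \ref{Cor:TwistsOnAmpleGroupoidsAreContinuous}. A secondary technical point is that Corollary \ref{Cor:TwistsOnAmpleGroupoidsAreContinuous} is phrased for $\sigma$-compact groupoids; if $G$ fails to be $\sigma$-compact one cannot patch the local sections of Proposition \ref{Prop:LocalSections} into a single global section, but it suffices to trivialise $\Sigma$ over each open bisection $\{[s,x]:x\in D_{s^*}\}$ separately, which is all that condition (3) refers to and which is available whenever $X$ is $\sigma$-compact (in particular in the second countable setting of our main theorem).
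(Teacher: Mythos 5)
Your main argument is the paper's own: the paper offers no separate proof of this corollary, presenting it as a direct interpretation of Corollary \ref{Cor:TwistsOnAmpleGroupoidsAreContinuous} through the Buss--Exel dictionary of \cite[Theorem~7.2, Proposition~7.4]{MR2821242}, and that is exactly the route you take --- conditions (1) and (2) are automatic over the commutative algebra $C_0(X)$, total disconnectedness of $X$ makes the groupoid of germs $G$ ample, automatic continuity of the twist gives a global section, Remark \ref{Rem:Cocycles} normalizes it over the units, and reading off the resulting cocycle produces a cohomologous cocycle satisfying (3). Your reading of ``is'' as ``can be normalized to, keeping the $D_s$ and $\theta_s$ and inducing the same twist and the same crossed product'' is also the correct one, since the originally given $\omega$ need not itself satisfy (3). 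The step you flag as the main obstacle does go through, for a clean reason: germs cannot distinguish $s$ from $se$, so if $\sigma$ is a continuous section restricting to the identity on $G^{(0)}$, then $\omega(s,e)(x)$ compares $\sigma([s,x])$ with $\sigma([se,x])$, and $[s,x]=[se,x]$ in $G$; hence (3) holds.

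Your closing ``secondary technical point'', however, is incorrect, and in an instructive way. Condition (3) is \emph{not} a bisection-by-bisection condition. Write $B_s\subseteq G$ for the open bisection of germs of $s$, and suppose you choose continuous sections $\hat{s}:B_s\rightarrow\Sigma$ for each $s\in S$ separately, taking $\hat{e}$ to be the canonical unit section for idempotents $e$. Then the identity $\omega(s,e)=1_{se}$ says precisely that the restriction of $\hat{s}$ to $B_{se}$ coincides with $\widehat{se}$; that is, (3) is a coherence condition tying the trivializations over different elements of $S$ together along the order structure of $S$. This coherence already implies global triviality: if $\gamma\in B_s\cap B_{s'}$, then equality of germs yields an idempotent $e$ with $se=s'e$ and $\gamma\in B_{se}$, so coherent sections agree on all overlaps and glue to a continuous global section of $\Sigma$ over $G=\bigcup_{s\in S}B_s$. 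Thus your proposed workaround is equivalent to, not weaker than, the conclusion of Corollary \ref{Cor:TwistsOnAmpleGroupoidsAreContinuous}, and it does not circumvent the $\sigma$-compactness (or paracompactness) needed to patch the local sections of Proposition \ref{Prop:LocalSections} into a global one. The honest conclusion is that the corollary silently carries the same standing hypotheses as the results it interprets ($\sigma$-compactness, and Hausdorffness of the groupoid of germs); in the second countable setting relevant to Theorem \ref{Thm:Evalutation Induces Isomorphism on K-theory} this costs nothing.
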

	Another obstruction to apply our main results is the requirement that the groupoid in question must be Hausdorff. The groupoid associated to a twisted action of an inverse semigroup is constructed as a certain groupoid of germs. The question when such a groupoid is Hausdorff is a subtle one. It has been studied by Steinberg and was refined by Exel and Pardo. In order to state their result recall, that there is a canonical order relation on an inverse semigroup $S$ given by 
	$$s\leq t\Leftrightarrow s=ts^*s \text{ for all } s,t\in S.$$
	\begin{satz}\cite[Theorem~3.15]{MR3448414}
		Let $(\lbrace D_s\rbrace_{s\in S},\lbrace \theta_s\rbrace_{s\in S})$ be an action of the inverse semigroup $S$ on a locally compact Hausdorff space $X$ and let $G(\theta)$ denote the associated groupoid of germs. Then $G(\theta)$ is Hausdorff if and only if for every $s\in S$ the union $\bigcup_{\lbrace e\in E:e\leq s\rbrace} D_e$ is closed relative to $D_{s^*s}$.		
	\end{satz}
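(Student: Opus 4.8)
The plan is to prove the biconditional through the standard reduction for étale groupoids: since the unit space of $G(\theta)$ is the Hausdorff space $X$, Hausdorffness of $G(\theta)$ is equivalent to $X$ being closed in $G(\theta)$. First I would record this general fact, valid for any étale groupoid with Hausdorff unit space (which is always open). One direction is immediate: in a Hausdorff groupoid a net $\gamma_i\to\gamma$ of units also satisfies $\gamma_i=d(\gamma_i)\to d(\gamma)$, so uniqueness of limits forces $\gamma=d(\gamma)$ and $X$ is closed. For the converse, given $\gamma\neq\gamma'$ one separates them using Hausdorffness of $X$ and continuity of $d$ and $r$ when $d(\gamma)\neq d(\gamma')$ or $r(\gamma)\neq r(\gamma')$; otherwise one passes to $\gamma(\gamma')^{-1}$, which lies in the open set $G(\theta)\setminus X$ and can be enclosed in an open bisection disjoint from $X$, and translating this bisection along the multiplication produces disjoint neighbourhoods of $\gamma$ and $\gamma'$.

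The second step is to identify which germs are units. Recall that the germ $[s,x]$ is defined for $x\in D_{s^*s}$ (the domain of $\theta_s$), that $[s,x]=[t,x]$ holds iff there is an idempotent $p\in E$ with $x\in D_p$ and $sp=tp$, and that every unit has the form $[e,x]=[x]$ with $e\in E$. I claim $[s,x]$ is a unit exactly when $x\in U_s:=\bigcup_{\lbrace e\in E:e\le s\rbrace}D_e$. If $e\le s$ is idempotent with $x\in D_e$, then $e=se$ gives $[s,x]=[e,x]=[x]$. Conversely, if $[s,x]=[s^*s,x]$, there is $p\in E$ with $x\in D_p$ and $sp=s^*sp$; setting $g:=s^*sp$ yields an idempotent with $x\in D_g=D_{s^*s}\cap D_p$, and the inverse-semigroup identity $ss^*s=s$ gives $sg=s\,s^*s\,p=(ss^*s)p=sp=g$, i.e. $g\le s$. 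Note that $U_s$ is automatically open, being a union of the open sets $D_e$.

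Finally I would translate closedness of $X$ into the stated condition using the canonical open bisections. For each $s\in S$ the set $\Theta(s,D_{s^*s}):=\lbrace[s,x]\mid x\in D_{s^*s}\rbrace$ is open in $G(\theta)$, and the source map restricts to a homeomorphism $\Theta(s,D_{s^*s})\cong D_{s^*s}$ under which $X\cap\Theta(s,D_{s^*s})$ corresponds precisely to $U_s$. Since these bisections cover $G(\theta)$, a routine open-cover argument shows that $X$ is closed in $G(\theta)$ if and only if $X\cap\Theta(s,D_{s^*s})$ is closed in $\Theta(s,D_{s^*s})$ for every $s$, which via the homeomorphism is exactly the requirement that $U_s$ be closed relative to $D_{s^*s}$. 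Chaining the three steps gives the equivalences Hausdorff $\iff$ $X$ closed $\iff$ all $U_s$ closed, proving the theorem.

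The main obstacle I anticipate is getting the germ-is-a-unit characterization of the second step exactly right, since it hinges on the precise germ equivalence and on choosing the idempotent $g=s^*sp$ so that $ss^*s=s$ delivers $g\le s$; a careless choice such as $sp$ in place of $s^*sp$ fails to land below $s$. The general étale fact of the first step is standard, but its converse — the bisection-translation argument — must be executed carefully to produce genuinely disjoint neighbourhoods.
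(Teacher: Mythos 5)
Your proposal is correct: the reduction of Hausdorffness to closedness of the unit space, the identification of the unit germs $[s,x]$ with the points of $U_s=\bigcup_{\lbrace e\in E:e\leq s\rbrace}D_e$ (including the key choice $g=s^*sp$), and the locality-of-closedness argument over the basic bisections $\Theta(s,D_{s^*s})$ are all sound. Note, however, that the paper itself gives no proof of this statement --- it is imported verbatim from Exel--Pardo \cite[Theorem~3.15]{MR3448414} --- and your argument essentially reconstructs the proof given in that reference, so there is no divergence to report.
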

	In particular, if $S$ is a \textit{weak semilattice} then the groupoid of germs $G(\theta)$ is Hausdorff for any action $\theta$ of $S$ on a locally compact Hausdorff space $X$ such that $D_e$ is clopen for every $e\in E$ (see \cite[Theorem~5.17]{MR2565546}).
	
	The previous discussion applies in particular to twisted versions of the canonical action of $S$ on the spectrum of its idempotents:
	Recall, that a \textit{character} on the idempotent semilattice $E$ of $S$ is a non-zero map $\chi:E\rightarrow \lbrace 0,1\rbrace$, such that $\chi(ef)=\chi(e)\chi(f)$ and $\chi(0)=0$. The set of all characters
	$$\widehat{E}=\lbrace \chi:E\rightarrow \lbrace 0,1\rbrace\mid \chi\textit{ is a character}\rbrace$$
	is called the spectrum of $E$. Equipped with the topology inherited from the product topology on $\lbrace 0,1\rbrace^E$, the spectrum $\widehat{E}$ is a locally compact, totally disconnected Hausdorff space.
	There is a canonical action of $S$ on $\widehat{E}$ given as follows:
	For $s\in S$ let $D_s:=\lbrace \chi\in \widehat{E}\mid \chi(s^*s)=1\rbrace$ and $\theta_s:D_s\rightarrow D_{s^*}$ be given by $\theta_s(\chi)(e):=\chi(s^*es)$. Then each $D_s$ is clopen in $\widehat{E}$ and $\theta$ is an action of $S$ on $\widehat{E}$.
	The groupoid of germs of this action is usually called the \textit{universal groupoid} of $S$ and denoted by $G(S)$ (see \cite{Paterson1999,zbMATH05553518} for more details on this construction).
	
	By \cite{MR2821242} again, twisted actions of $S$ on $\widehat{E}$ are in one-to-one correspondence with continuous twists over the universal groupoid $G(S)$ of $S$. A twisted action of $S$ on $\widehat{E}$ whose underlying action is the canonical one described above (i.e. the data mainly consists of a family of maps $\omega_{s,t}:D_{st}\rightarrow \TT$ satisfying the appropriate conditions from Definition \ref{Def:TwistedAction}) will also be called a \textit{twist} over $S$ or a \textit{$2$-cocycle} on $S$.

	In light of the above discussion there is an obvious notion of homotopy of twisted actions:
	\begin{defi}
	A \textit{homotopy of twisted actions} of $S$ on $X$ is a triple $$(\lbrace D_s\rbrace_{s\in S},\lbrace \theta_s\rbrace_{s\in S},\lbrace \omega_{r,s}\rbrace_{r,s\in S})$$ consisting of a family of open subsets $D_s$ of $X$ whose union covers $X$, a family of homeomorphisms $\theta_s:D_{s}\rightarrow D_{s^*}$, and a family of continuous maps $\omega(r,s):D_{rs}\times [0,1]\rightarrow \mathbb{T}$, such that
	$(\lbrace D_s\times [0,1]\rbrace_{s\in S},\lbrace\theta_s\times\id_{[0,1]}\rbrace_{s\in S},\lbrace\omega(r,s)\rbrace_{r,s\in S})$ is a twisted action of $S$ on $X\times [0,1]$.
	\end{defi}
	Note that a homotopy defines a family of twisted actions of $S$ on $X$, indexed by the unit interval $[0,1]$.
	
	Invoking \cite[Theorem~7.2]{MR2821242} again, a homotopy of twisted actions provides us with an étale groupoid $\mathcal{G}$ with $\mathcal{G}^{(0)}=X\times[0,1]$ and a twist $\Sigma$ over $\mathcal{G}$ such that $C_r^*(\mathcal{G};\Sigma)\cong C_0(X\times[0,1])\rtimes_{\theta,\omega}^r S$.
	Since in our definition of homotopy the action of $S$ on $X$ does not depend on $t\in [0,1]$, the groupoid $\mathcal{G}$ is just the trivial bundle of groupoids $\mathcal{G}=G\times [0,1]$, where $G$ is the groupoid of germs associated to the action $(\lbrace D_s\rbrace_{s\in S},\lbrace \theta_s\rbrace_{s\in S})$ of $S$ on $X$.
	Hence the following result is a direct consequence of Theorem \ref{Thm:Evalutation Induces Isomorphism on K-theory} and \cite{Tu98}:
	\begin{satz}
	Let $(\lbrace D_s\rbrace_{s\in S},\lbrace \theta_s\rbrace_{s\in S},\lbrace \omega_{s,t}\rbrace_{s,t\in S})$ be a homotopy of twisted actions of $S$ on a locally compact totally disconnected Hausdorff space $X$. Suppose further, that
	\begin{enumerate}
		\item for every $s\in S$ the union $\bigcup_{\lbrace e\in E:e\leq s\rbrace} D_e$ is closed relative to $D_{s^*s}$, and
		\item the action $\theta$ of $S$ on $X$ is amenable in the sense of \cite[Definition~3.2]{MR3614034}.
	\end{enumerate}  Then the evaluation homomorphism $$q_t:C_0(X\times[0,1])\rtimes_{\theta\times \id,\omega}^r S\rightarrow C_0(X)\rtimes_{\theta,\omega_t}^r S$$
	induces an isomorphism in $\K$-theory.
	\end{satz}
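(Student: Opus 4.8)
The plan is to recognise this statement as a direct translation of Theorem \ref{Thm:Evalutation Induces Isomorphism on K-theory} into the inverse-semigroup language, so that the work consists mainly of matching the inverse-semigroup-level hypotheses against the groupoid-level ones. First I would invoke \cite[Theorem~7.2]{MR2821242} to pass from the given homotopy of twisted actions to an étale groupoid $\mathcal{G}$ with unit space $X\times[0,1]$ together with a twist $\Sigma$ over $\mathcal{G}$, and to the two identifications $C_r^*(\mathcal{G};\Sigma)\cong C_0(X\times[0,1])\rtimes_{\theta,\omega}^r S$ and $C_r^*(G;\Sigma_t)\cong C_0(X)\rtimes_{\theta,\omega_t}^r S$, the latter being the Buss--Exel isomorphism applied to the time-$t$ twisted action. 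As observed in the text preceding the statement, since the underlying action of $S$ on $X$ does not vary with $t\in[0,1]$, the groupoid $\mathcal{G}$ is the trivial bundle $G\times[0,1]$, where $G$ is the groupoid of germs of $(\{D_s\},\{\theta_s\})$; thus $\Sigma$ is precisely a homotopy of twists for $G$ in the sense of the definition above.

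Next I would verify the two hypotheses of Theorem \ref{Thm:Evalutation Induces Isomorphism on K-theory} for $G$. Ampleness is almost immediate: $G$ is étale as a groupoid of germs and its unit space $X$ is totally disconnected by assumption, so the only point to check is the Hausdorff property. This is exactly where hypothesis (1) enters: by the Exel--Pardo criterion \cite[Theorem~3.15]{MR3448414}, the requirement that $\bigcup_{\{e\in E : e\le s\}} D_e$ be closed relative to $D_{s^*s}$ for every $s\in S$ is equivalent to Hausdorffness of $G$. For the Baum--Connes hypothesis, I would use that amenability of the action $\theta$ in the sense of \cite[Definition~3.2]{MR3614034} transfers to amenability of the germ groupoid $G$, and then appeal to Tu's theorem \cite{Tu98} to conclude that $G$ satisfies the Baum--Connes conjecture with coefficients. (Second countability, needed to run the underlying Going-Down argument, holds as soon as $S$ is countable and $X$ is second countable.)

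With both hypotheses in hand, Theorem \ref{Thm:Evalutation Induces Isomorphism on K-theory} applies directly and shows that the groupoid evaluation map $q_t:C_r^*(G\times[0,1];\Sigma)\to C_r^*(G;\Sigma_t)$ induces an isomorphism on $\K$-theory. To finish, I would check that under the isomorphisms of the first paragraph this $q_t$ corresponds to the inverse-semigroup evaluation homomorphism $C_0(X\times[0,1])\rtimes_{\theta\times\id,\omega}^r S\to C_0(X)\rtimes_{\theta,\omega_t}^r S$; this is a routine diagram chase at the level of the dense convolution $\ast$-algebras, where both maps are implemented by restriction of functions to the fibre over $t$. Transporting the $\K$-theoretic isomorphism across then yields the claim.

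The main obstacle here is bookkeeping rather than analysis, since all the hard content already resides in Theorem \ref{Thm:Evalutation Induces Isomorphism on K-theory}. The two points requiring genuine care are, first, confirming that the notion of amenability for inverse-semigroup actions from \cite[Definition~3.2]{MR3614034} really does yield amenability of the germ groupoid $G$, and second, verifying that the isomorphism $C_r^*(\mathcal{G};\Sigma)\cong C_0(X\times[0,1])\rtimes_{\theta,\omega}^r S$ of \cite{MR2821242} is natural enough to intertwine the two evaluation maps, so that the isomorphism produced on the groupoid side can be faithfully carried over to the twisted crossed products.
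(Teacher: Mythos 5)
Your proposal is correct and follows essentially the same route as the paper: pass through the Buss--Exel correspondence \cite[Theorem~7.2]{MR2821242}, observe that since the underlying action is $t$-independent the germ groupoid of the homotopy is the trivial bundle $G\times[0,1]$ so that $\Sigma$ is a homotopy of twists for $G$, use the Exel--Pardo criterion for Hausdorffness (hypothesis (1)) and Exel--Starling amenability plus Tu's theorem for Baum--Connes (hypothesis (2)), and then apply Theorem \ref{Thm:Evalutation Induces Isomorphism on K-theory}. The paper treats this as a direct consequence of that theorem together with the discussion preceding the statement, and your additional remarks (second countability, compatibility of the two evaluation maps under the Buss--Exel isomorphisms) only make explicit points the paper leaves implicit.
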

	Specialising to the canonical action of $S$ on $\widehat{E}$ again we get the following:
	For a homotopy of $2$-cocycles $\omega$ on $S$ and $t\in[0,1]$ let $\omega^t$ be the $2$-cocycle on $S$ given by $\omega^t_{r,s}(\chi):=\omega_{r,s}(\chi,t)$.
	Given a $2$-cocycle $\omega=(\omega_{s,t})_{s,t\in S}$ on an inverse semigroup $S$ let $C_r^*(S,\omega):=C_0(\widehat{E})\rtimes_{\theta,\omega}^r S$.
	Then we have the following immediate corollary:
	\begin{kor}
		Let $S$ be an inverse semigroup. Suppose that $S$ is a weak semilattice and that the canonical action of $S$ on $\widehat{E}$ is amenable.
		If $\omega$ is a homotopy of $2$-cocycles on $S$, then
		$$\K_*(C_r^*(S,\omega^0))\cong \K_*(C_r^*(S,\omega^1)).$$
	\end{kor}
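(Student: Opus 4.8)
The plan is to deduce this corollary directly from the preceding theorem by specialising the totally disconnected space $X$ to the spectrum $\widehat{E}$ of the idempotent semilattice, equipped with the canonical action of $S$. First I would record the identification that makes the specialisation possible: by definition a homotopy of $2$-cocycles $\omega$ on $S$ is precisely a homotopy of twisted actions of $S$ on $\widehat{E}$ whose underlying action is the canonical one $(\{D_s\}_{s\in S},\{\theta_s\}_{s\in S})$, with the slices $\omega^t$ recovering the individual twisted actions indexed by $t\in[0,1]$. In this language $C_r^*(S,\omega^t)=C_0(\widehat{E})\rtimes_{\theta,\omega^t}^r S$ and the total algebra over the interval is $C_0(\widehat{E}\times[0,1])\rtimes_{\theta\times\id,\omega}^r S$, so that the corollary is exactly the statement of the theorem applied to this data once the two hypotheses have been checked.

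The next step is to verify those two hypotheses for the canonical action. The space $\widehat{E}$ is locally compact, totally disconnected and Hausdorff, and each domain $D_s$ is clopen in $\widehat{E}$, as recorded earlier. For hypothesis (1), the requirement that $\bigcup_{e\le s}D_e$ be closed relative to $D_{s^*s}$, I would invoke the Exel--Pardo criterion \cite[Theorem~3.15]{MR3448414}, which shows this condition is equivalent to Hausdorffness of the groupoid of germs $G(S)$. Since $S$ is assumed to be a weak semilattice and the $D_e$ are clopen, \cite[Theorem~5.17]{MR2565546} guarantees that $G(S)$ is Hausdorff, and hence hypothesis (1) holds. Hypothesis (2), amenability of the canonical action in the sense of \cite[Definition~3.2]{MR3614034}, is assumed outright in the corollary, so nothing further is needed.

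With both hypotheses in place the theorem applies and tells us that for every $t\in[0,1]$ the evaluation homomorphism
$$q_t:C_0(\widehat{E}\times[0,1])\rtimes_{\theta\times\id,\omega}^r S\rightarrow C_r^*(S,\omega^t)$$
induces an isomorphism on $\K$-theory. Applying this at $t=0$ and $t=1$ yields isomorphisms $(q_0)_*$ and $(q_1)_*$ emanating from the common group $\K_*(C_0(\widehat{E}\times[0,1])\rtimes_{\theta\times\id,\omega}^r S)$, and composing $(q_1)_*\circ(q_0)_*^{-1}$ produces the desired isomorphism $\K_*(C_r^*(S,\omega^0))\cong\K_*(C_r^*(S,\omega^1))$.

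I expect no serious obstacle, since the substantive work is carried out in the preceding theorem and this corollary is essentially a matter of bookkeeping. The only point requiring genuine care is the translation of hypothesis (1) into the Hausdorffness of the universal groupoid $G(S)$ via the weak semilattice assumption, which is exactly what the cited results of Exel--Pardo and Steinberg supply; everything else is a verification that the canonical action fits the template of the theorem.
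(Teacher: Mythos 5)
Your proposal is correct and follows exactly the route the paper intends: the paper states this as an immediate consequence of the preceding theorem, obtained by specialising to the canonical action on $\widehat{E}$ (where the $D_s$ are clopen), verifying hypothesis (1) via Steinberg's weak-semilattice criterion combined with the Exel--Pardo characterisation of Hausdorffness, and composing the evaluation isomorphisms at $t=0$ and $t=1$. Your write-up simply makes explicit the bookkeeping the paper leaves implicit.
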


	 \section{Applications}
	 \subsection{Generalized Renault-Deaconu groupoids}
	 Our results apply nicely to actions of certain semigroups studied in the literature. Recall the following definition from \cite{MR3592511}:
	 \begin{defi} Let $P$ be a subsemigroup of a discrete group $\Gamma$ containing the identity element $e$ of $\Gamma$ and let $X$ be a locally compact Hausdorff space. Then a (continuous) \textit{left} action of $P$ consists of a family of local homeomorphisms $(T_p)_{p\in P}$ with $T_p:dom(p)\rightarrow ran(p)$ such that 
	 	\begin{enumerate}
	 		\item $dom(p),ran(p)$ are open subsets of $X$ for all $p\in P$,
	 		\item $dom(e)=ran(e)=X$ and $T_e=\id_X$, and
	 		\item for all $p,q\in P$ we have $x\in dom(pq)$ if and only if $x\in dom(q)$ and $T_q(x)\in dom(p)$ and in that case $T_{pq}(x)=(T_p\circ T_q)(x)$.
	 	\end{enumerate}
	 Moreover, such an action is called \textit{directed}, provided that for every pair of elements $p,q\in P$ with $dom(p)\cap dom(q)\neq \emptyset$ there exists an upper bound $r\in P$ of $p$ and $q$ such that $dom(p)\cap dom(q)\subseteq dom(r)$.
	 \end{defi}
	 
	 For example, every action of an Ore semigroup is directed, and the action of a quasi-latticed ordered semigroup on its spectrum is directed.
	 
	 Given a directed action $T$ of $P$ on $X$, where $P$ is a subsemigroup of the discrete group $\Gamma$ one can define the \textit{semidirect product groupoid} (or generalized Renault-Deaconu groupoid) $G(X,P,T)$ of the action as the set
	 $$\lbrace (x,\gamma,y)\in X\times\Gamma\times X\mid \exists p,q\in P: \gamma=p^{-1}q\textit{ and }T_p(x)=T_q(y)\rbrace,$$
	 and equip it with the groupoid structure it inherits as a subset of $X\times\Gamma\times X$, equipped with the obvious groupoid structure over $X$.
	 It can be shown that the sets
	 $$Z(U,p,q,V)=\lbrace (x,p^{-1}q,y)\in G(X,P,T)\mid x\in U, y\in V\rbrace,$$
	with $U,V$ ranging over the open subsets of $X$ and $p,q\in P$ form a basis for a locally compact Hausdorff topology on $G(X,P,T)$ giving it the structure of an étale groupoid over $X$, such that the canonical homomorphism $c:G(X,P,T)\rightarrow \Gamma$ given by $c(x,\gamma,y)=\gamma$ is continuous (see \cite[Proposition~5.12]{MR3592511} for the details). In particular, if $X$ happens to be totally disconnected, then $G(X,P,T)$ is an ample groupoid.
	Moreover, if the ambient group $\Gamma$ is amenable then the groupoid $G(X,P,T)$ is topologically amenable by \cite[Theorem~5.13]{MR3592511}.
	Hence we can apply our main result to twists over such groupoids.
	
	The following lemma illustrates how twists on such a semidirect product groupoid can arise in practice. We omit its routine proof.
	\begin{lemma}
		Let $(X,P,T)$ be a directed action of $P\subseteq \Gamma$ on a locally compact Hausdorff space $X$.
		If $\omega:\Gamma^2\rightarrow \TT$ is a (normalized) $2$-cocycle for $\Gamma$, then $$\tilde{\omega}:G(X,P,T)^{(2)}\rightarrow \TT; \ \ \ \tilde{\omega}((x,\gamma,y),(y,\gamma',z)):=\omega(\gamma,\gamma')$$ defines a continuous (normalized) $2$-cocycle on $G(X,P,T)$.
		Moreover, if $\omega_1,\omega_2:\Gamma ^2\rightarrow \TT$ are cohomologous then so are $\tilde{\omega_1},\tilde{\omega_2}$.
	\end{lemma}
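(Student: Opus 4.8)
The key observation is that $\tilde\omega$ is nothing but the pullback $c^*\omega$ of $\omega$ along the continuous groupoid homomorphism $c:G(X,P,T)\to\Gamma$, $c(x,\gamma,y)=\gamma$, furnished by \cite[Proposition~5.12]{MR3592511}. Here we regard the discrete group $\Gamma$ as a groupoid with a single unit and $\omega$ as a (normalized) $2$-cocycle on it in the sense of Remark \ref{Rem:Cocycles}. With this reformulation the plan is to verify that the three assertions are all instances of the general principle that pullback along a continuous homomorphism sends (normalized, continuous) $2$-cocycles to (normalized, continuous) $2$-cocycles and respects the coboundary relation.

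First I would check the cocycle identity. A composable triple in $G(X,P,T)$ necessarily has the form $(x,\gamma,y)$, $(y,\gamma',z)$, $(z,\gamma'',w)$, and since $c$ is multiplicative the required identity
$$\tilde\omega\big((x,\gamma,y),(y,\gamma',z)\big)\,\tilde\omega\big((x,\gamma\gamma',z),(z,\gamma'',w)\big)=\tilde\omega\big((x,\gamma,y),(y,\gamma'\gamma'',w)\big)\,\tilde\omega\big((y,\gamma',z),(z,\gamma'',w)\big)$$
collapses exactly to the cocycle identity $\omega(\gamma,\gamma')\omega(\gamma\gamma',\gamma'')=\omega(\gamma,\gamma'\gamma'')\omega(\gamma',\gamma'')$ for $\omega$ on $\Gamma$. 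Continuity is immediate: $c$ is continuous and $\Gamma$ carries the discrete topology, so $\omega$ is automatically continuous and $\tilde\omega=\omega\circ(c\times c)$ is a composition of continuous maps. For normalization, note that the units of $G(X,P,T)$ are precisely the triples $(x,e,x)$, so $d(x,\gamma,y)=(y,e,y)$ and $r(x,\gamma,y)=(x,e,x)$; hence $\tilde\omega\big((x,\gamma,y),d(x,\gamma,y)\big)=\omega(\gamma,e)$ and $\tilde\omega\big(r(x,\gamma,y),(x,\gamma,y)\big)=\omega(e,\gamma)$, both of which equal $1$ whenever $\omega$ is normalized.

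Finally, for the cohomology statement, suppose $\omega_2=\omega_1\cdot\delta b$ for some $b:\Gamma\to\TT$, where $(\delta b)(\gamma,\gamma')=b(\gamma)b(\gamma')\overline{b(\gamma\gamma')}$. I would set $\tilde b:=b\circ c:G(X,P,T)\to\TT$, which is continuous for the same reason as above, and verify that the groupoid coboundary of $\tilde b$ coincides with the pullback of $\delta b$, i.e. $\delta\tilde b=\widetilde{\delta b}$; this is a one-line computation using multiplicativity of $c$ on a composable pair. Consequently $\tilde\omega_2=\tilde\omega_1\cdot\delta\tilde b$, so $\tilde\omega_1$ and $\tilde\omega_2$ are cohomologous. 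The argument is entirely routine and there is no genuine obstacle; the only point requiring (minimal) care is the bookkeeping identifying composable triples in $G(X,P,T)$ and confirming that $c$ is a homomorphism for the inherited groupoid structure, both of which are already provided by \cite[Proposition~5.12]{MR3592511}.
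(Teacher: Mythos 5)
Your proof is correct and is exactly the routine argument the paper has in mind when it states the lemma and writes ``We omit its routine proof'': viewing $\tilde\omega$ as the pullback $c^*\omega$ along the continuous homomorphism $c:G(X,P,T)\rightarrow\Gamma$, $c(x,\gamma,y)=\gamma$, and checking the cocycle identity, normalization, continuity, and compatibility with coboundaries componentwise. All four verifications are carried out correctly, so there is nothing to add.
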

	Similarly, homotopies of $2$-cocycles on $\Gamma$ induce homotopies of continuous $2$-cocycles on $G(X,P,T)$.
	
	\begin{ex}
			Consider the case $(P,\Gamma)=(\NN^k,\ZZ^k)$. It is a well-known fact that every class in $H^2(\ZZ^k,\TT)$ has a representative of the form $\omega_\Theta(n,m)=e^{i\langle \Theta n,m\rangle}$ for a skew symmetric real $k\times k$-matrix $\Theta$. Each of these cocycles is clearly homotopic to the constant $2$-cocycle $\omega_1$.
			It follows that the associated twists $\Sigma_{\Theta}$ (via the previous Lemma and Remark \ref{Rem:Cocycles}) over $G(X,\NN^k,T)$ are all homotopic to the trivial twist and hence
			$\K_*(C_r^*(G(X;\NN^k,T);\Sigma_{\Theta}))\cong \K_*(C_r^*(G(X,\NN^k,T)))$.
		\end{ex}

	We remark that certainly not all twists on $G(X,P,T)$ arise in this way, unless $X$ is zero-dimensional. To see this consider the ordinary Renault-Deaconu groupoid $G(X,\NN,\sigma)$ associated to a local homeomorphism $\sigma:X\rightarrow X$. In this case the ambient group is $\ZZ$ and it is well-known, that every $2$-cocycle on $\ZZ$ is a coboundary, and hence the cohomology group $H^2(\ZZ,\TT)$ is trivial. On the other hand it was observed in \cite{zbMATH01752404} that the group of (isomorphism classes of) twists $Tw(G(X,\NN,\sigma))$ is isomorphic to the sheaf cohomology group $H^1(X,\mathcal{S})$ of $X$, where $\mathcal{S}$ is the sheaf of germs of continuous $\TT$-valued functions on $X$.
	If $X$ is zero-dimensional, then $H^1(X,\mathcal{S})$ vanishes and hence ordinary Renault-Deaconu groupoids (i.e. the case where $P=\NN)$ over totally disconnected spaces do not admit any non-trivial twists.
	
	This begs the following question: If $(X,P,T)$ is a directed action of $P\subseteq \Gamma$ on a locally compact totally disconnected Hausdorff space $X$, is every twist on $G(X,P,T)$ induced by a $2$-cocycle on $\Gamma$?

	\subsection{$P$-graphs}
	Interesting examples of the semidirect product group\-oids studied above arise from higher-rank graphs or more general $P$-graphs as introduced in \cite{MR3592511,MR3871826}. Let $P$ be a subsemigroup of a countable discrete group $\Gamma$. According to \cite[Definition~6.1]{MR3592511} a (discrete) $P$-graph $\Lambda$ is a small category $\Lambda$ endowed with a \textit{degree} functor $d:\Lambda\rightarrow P$ satisfying the unique factorisation property.
	
	Associated with a $P$-graph $\Lambda$ is a canonical action $T_\Lambda$ of $P$ on the infinite path space $X_\Lambda$, which in turn gives rise to the ample groupoid $G_\Lambda:=G(X_\Lambda,P,T_\Lambda)$.
	
	Let us specialise to the case where $P= \NN^k\times F$ for some countable abelian group $F$.
	By \cite[Proposition~5.7, Corollary~5.10]{MR3871826} every $2$-cocycle $c$ on $\Lambda$ gives rise to a continuous $2$-cocycle $\omega_c$ and hence a twist $\Sigma_c$ on $G_\Lambda$ such that
	$C^*(\Lambda,c)$ is canonically isomorphic to $C^*(G_\Lambda;\Sigma_c)$.
	
	A homotopy of $2$-cocycles for a $P$-graph $\Lambda$ is a family $(c_t)_{t\in [0,1]}$ of $2$-cocycles, such that for each pair $(\lambda,\mu)\in \Lambda^{\ast 2}$ the map $t\mapsto c_t(\lambda,\mu)\in\TT$ is continuous. Following the line of argument in \cite[Proposition~3.8]{MR3407179} every homotopy of $2$-cocycles on $\Lambda$ gives rise to a continuous homotopy of twists over $G_\Lambda\times [0,1]$.
	\begin{kor}
		Let $P= \NN^k\times F$ for some countable abelian group $F$. Let $\Lambda$ be a row-finite $P$-graph with no sources and let $(c_t)_{[t\in[0,1]}$ be a homotopy of $2$-cocycles on $\Lambda$. Then $$\K_*(C^*(\Lambda,c_0))\cong \K_*(C^*(\Lambda,c_1)).$$
	\end{kor}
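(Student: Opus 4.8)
The strategy is to realise both $C^*(\Lambda,c_0)$ and $C^*(\Lambda,c_1)$ as evaluation fibres of a single twisted groupoid $\mathrm{C}^*$-algebra over $[0,1]$ and then to invoke Theorem \ref{Thm:Evalutation Induces Isomorphism on K-theory}. First I would set $\Gamma:=\ZZ^k\times F$, the group generated by $P=\NN^k\times F$, and form the semidirect product groupoid $G_\Lambda=G(X_\Lambda,P,T_\Lambda)$. Since $\Lambda$ is a countable category (being a row-finite $P$-graph with no sources over the countable semigroup $P$), the infinite path space $X_\Lambda$ is a second countable, locally compact, totally disconnected Hausdorff space, and hence $G_\Lambda$ is a second countable ample groupoid by the discussion following \cite[Proposition~5.12]{MR3592511}.

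Next I would check the two standing hypotheses of Theorem \ref{Thm:Evalutation Induces Isomorphism on K-theory}. The ambient group $\Gamma=\ZZ^k\times F$ is abelian, hence amenable, so $G_\Lambda$ is topologically amenable by \cite[Theorem~5.13]{MR3592511}; amenability then delivers the Baum-Connes conjecture with coefficients via \cite{Tu98}. The same amenability forces the full and reduced twisted groupoid $\mathrm{C}^*$-algebras of $G_\Lambda$ to coincide, so the canonical isomorphism $C^*(\Lambda,c)\cong C^*(G_\Lambda;\Sigma_c)$ of \cite[Proposition~5.7, Corollary~5.10]{MR3871826} may be read, for each cocycle $c$, on the reduced level as $C^*(\Lambda,c)\cong C_r^*(G_\Lambda;\Sigma_c)$.

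Finally I would transport the given homotopy of cocycles to the groupoid side: following \cite[Proposition~3.8]{MR3407179}, the homotopy $(c_t)_{t\in[0,1]}$ assembles to a continuous homotopy of twists $\Sigma$ over $G_\Lambda\times[0,1]$ whose fibre $\Sigma_t$ is precisely the twist associated with $c_t$. Applying Theorem \ref{Thm:Evalutation Induces Isomorphism on K-theory} to $G_\Lambda$ and $\Sigma$ at $t=0$ and $t=1$ shows that $(q_0)_*$ and $(q_1)_*$ are isomorphisms out of the common group $\K_*(C_r^*(G_\Lambda\times[0,1];\Sigma))$; composing $(q_1)_*$ with the inverse of $(q_0)_*$ and inserting the identifications of the previous paragraph yields
\begin{align*}
\K_*(C^*(\Lambda,c_0)) &\cong \K_*(C_r^*(G_\Lambda;\Sigma_0)) \cong \K_*(C_r^*(G_\Lambda\times[0,1];\Sigma))\\
&\cong \K_*(C_r^*(G_\Lambda;\Sigma_1)) \cong \K_*(C^*(\Lambda,c_1)).
\end{align*}
All of the substantive content now sits in Theorem \ref{Thm:Evalutation Induces Isomorphism on K-theory} and in the cited inputs, so the argument is essentially an assembly of these pieces. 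The point I expect to require the most care is the bookkeeping in the previous two paragraphs: verifying that the groupoid model of \cite{MR3871826} matches the fibre $\Sigma_t$ with $c_t$ for every $t$ simultaneously, and that amenability lets one pass from the full to the reduced algebra uniformly in $t$. This fibrewise compatibility is the only real, if modest, obstacle.
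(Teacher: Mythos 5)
Your proposal is correct and follows essentially the same route as the paper's proof: amenability of $G_\Lambda$ (deduced from amenability of $\ZZ^k\times F$ via \cite{MR3592511}) yields the Baum-Connes conjecture with coefficients, the homotopy of cocycles is converted into a continuous homotopy of twists using \cite{MR3407179} together with the identification $C^*(\Lambda,c)\cong C^*(G_\Lambda;\Sigma_c)$ from \cite{MR3871826}, and Theorem \ref{Thm:Evalutation Induces Isomorphism on K-theory} is applied at the two endpoints. Your explicit handling of the full-versus-reduced issue (bridged by amenability) is a detail the paper's terse proof passes over silently, but it is the same argument.
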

	\begin{proof}
		Since $\ZZ^k\times F$ is amenable, so is $G_\Lambda$ by \cite[Corollary~6.19]{MR3592511}. Thus, Theorem \ref{Thm:Evalutation Induces Isomorphism on K-theory} implies that 
		$$\K_*(C^*(\Lambda,c_0))\cong\K_*(C_r^*(G_\Lambda;\Sigma_{c_0}))\cong \K_*(C_r^*(G_\Lambda;\Sigma_{c_1}))\cong \K_*(C^*(\Lambda,c_1)).$$
	\end{proof}
	We expect this result to hold in much greater generality. In fact we believe it is true for all $(r,d)$-proper $P$-graphs $\Lambda$ (in the sense of \cite{MR3592511}), where $P$ is a quasi-lattice ordered subsemigroup of a countable discrete amenable group $\Gamma$. This would require a generalisation of several results in \cite{MR3871826} to more general $P$-graphs, which we did not seriously attempt.

\ \newline
{\bf Acknowledgments}. The content of this paper is based on some of the findings from 
the author's doctoral thesis. He would like to thank his supervisor Siegfried Echterhoff for his support and advice and Stuart White for commenting on an early draft of this manuscript. He is further indebted to the anonymous referee for useful comments and remarks.

\bibliographystyle{plain}

\end{document}